\documentclass[12pt,english,BCOR7.5mm]{amsart}
\usepackage{ae,aecompl}
\usepackage[T1]{fontenc}
\usepackage[latin9]{inputenc}
\usepackage[letterpaper]{geometry}
\geometry{verbose,tmargin=1in,bmargin=1in,lmargin=1in,rmargin=1in}
\usepackage{color}
\usepackage{babel}
\usepackage{amsthm}
\usepackage{amstext}
\usepackage{amssymb}
\usepackage[unicode=true,
 bookmarks=true,bookmarksnumbered=true,bookmarksopen=true,bookmarksopenlevel=2,
 breaklinks=false,pdfborder={0 0 1},backref=false,colorlinks=true]
 {hyperref}
\hypersetup{pdftitle={Using XY-pc in LyX},
 pdfauthor={H. Peter Gumm},
 pdfsubject={LyX's XY-pic manual},
 pdfkeywords={LyX, documentation},
 linkcolor=black,citecolor=black,urlcolor=blue,filecolor=blue,pdfpagelayout=OneColumn,pdfnewwindow=true,pdfstartview=XYZ,plainpages=false,pdfpagelabels}

\makeatletter
\numberwithin{equation}{section}
\numberwithin{figure}{section}
\theoremstyle{plain}
\newtheorem{thm}{\protect\theoremname}
  \theoremstyle{plain}
  \newtheorem{conjecture}[thm]{\protect\conjecturename}
  \theoremstyle{plain}
  \newtheorem{cor}[thm]{\protect\corollaryname}
  \theoremstyle{plain}
  \newtheorem{lem}[thm]{\protect\lemmaname}
  \theoremstyle{definition}
  \newtheorem*{problem*}{\protect\problemname}
  \theoremstyle{plain}
  \newtheorem{prop}[thm]{\protect\propositionname}

\usepackage{babel}
\usepackage[all]{xy}

\newcommand{\xyR}[1]{
  \xydef@\xymatrixrowsep@{#1}}
\newcommand{\xyC}[1]{
  \xydef@\xymatrixcolsep@{#1}}

\newdir{|>}{!/4.5pt/@{|}*:(1,-.2)@^{>}*:(1,+.2)@_{>}}

\let\myTOC\tableofcontents
\renewcommand{\tableofcontents}{%
  \pdfbookmark[1]{\contentsname}{}
  \myTOC }

\def\LyX{\texorpdfstring{%
  L\kern-.1667em\lower.25em\hbox{Y}\kern-.125emX\@}
  {LyX}}

  \providecommand{\conjecturename}{Conjecture}
  \providecommand{\corollaryname}{Corollary}
  \providecommand{\lemmaname}{Lemma}
  \providecommand{\problemname}{Problem}
  \providecommand{\propositionname}{Proposition}
\providecommand{\theoremname}{Theorem}

\makeatother

  \providecommand{\conjecturename}{Conjecture}
  \providecommand{\corollaryname}{Corollary}
  \providecommand{\lemmaname}{Lemma}
  \providecommand{\problemname}{Problem}
  \providecommand{\propositionname}{Proposition}
\providecommand{\theoremname}{Theorem}

\begin{document}

\title{Zero-sum Subsequences of Length $kq$ over Finite Abelian $p$-Groups}

\author{Xiaoyu He}

\email{xiaoyuhe@college.harvard.edu}

\address{Eliot House, Harvard College, Cambridge, MA 02138.}
\begin{abstract}
For a finite abelian group $G$ and a positive integer $k$, let $s_{k}(G)$
denote the smallest integer $\ell\in\mathbb{N}$ such that any sequence
$S$ of elements of $G$ of length $|S|\geq\ell$ has a zero-sum subsequence
with length $k$. The celebrated Erd\H{o}s-Ginzburg-Ziv theorem determines
$s_{n}(C_{n})=2n-1$ for cyclic groups $C_{n}$, while Reiher showed
in 2007 that $s_{n}(C_{n}^{2})=4n-3$. In this paper we prove for
a $p$-group $G$ with exponent $\exp(G)=q$ the upper bound $s_{kq}(G)\le(k+2d-2)q+3D(G)-3$
whenever $k\geq d$, where $d=\Big\lceil\frac{D(G)}{q}\Big\rceil$
and $p$ is a prime satisfying $p\ge2d+3\Big\lceil\frac{D(G)}{2q}\Big\rceil-3$,
where $D(G)$ is the Davenport constant of the finite abelian group
$G$. This is the correct order of growth in both $k$ and $d$. As
a corollary, we show $s_{kq}(C_{q}^{d})=(k+d)q-d$ whenever $k\geq p+d$
and $2p\geq7d-3$, resolving a case of the conjecture of Gao, Han,
Peng, and Sun that $s_{k\exp(G)}(G)=k\exp(G)+D(G)-1$ whenever $k\exp(G)\geq D(G)$.
We also obtain a general bound $s_{kn}(C_{n}^{d})\leq9kn$ for $n$
with large prime factors and $k$ sufficiently large. Our methods
are inspired by the algebraic method of Kubertin, who proved that
$s_{kq}(C_{q}^{d})\leq(k+Cd^{2})q-d$ whenever $k\geq d$ and $q$
is a prime power. 
\end{abstract}
\maketitle

\section*{Introduction}

In 1961, Erd\H{o}s, Ginzburg and Ziv proved the following result,
sparking interest in the additive theory of sequences over finite
abelian groups. 
\begin{thm}
\label{thm:EGZ}\cite{EGZ} Let $n$ be an arbitrary positive integer.
Given any sequence $S$ of $2n-1$ integers, there is a subsequence
$T$ of $S$ with length $n$, the sum of whose terms is divisible
by $n$. 
\end{thm}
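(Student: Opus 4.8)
The plan is to prove the theorem first for prime $n$ and then bootstrap to all $n$ by a multiplicative induction. Throughout I identify ``divisible by $n$'' with ``zero in $\mathbb{Z}/n\mathbb{Z}$'', so that the task is to produce a length-$n$ zero-sum subsequence.

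For the prime case $n=p$, I would invoke the Chevalley--Warning theorem. Given integers $a_{1},\dots,a_{2p-1}$, I reduce them modulo $p$ and consider over $\mathbb{F}_{p}$ the two polynomials
\[
f_{1}=\sum_{i=1}^{2p-1}x_{i}^{p-1},\qquad f_{2}=\sum_{i=1}^{2p-1}a_{i}x_{i}^{p-1}.
\]
Their total degree $2(p-1)$ is strictly less than the number $2p-1$ of variables, so Chevalley--Warning guarantees that the number of common zeros in $\mathbb{F}_{p}^{2p-1}$ is divisible by $p$; since the origin is one such zero, there must be a nonzero common zero $x$. By Fermat's little theorem $x_{i}^{p-1}$ equals $1$ when $x_{i}\neq0$ and $0$ otherwise, so the vanishing of $f_{1}$ forces the number of nonzero coordinates to be a nonzero multiple of $p$, hence exactly $p$. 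Letting $T$ be this set of $p$ indices, the vanishing of $f_{2}$ gives $\sum_{i\in T}a_{i}\equiv0\pmod{p}$, as desired.

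For the induction I would show that if the statement holds for $m$ and for $n$ then it holds for $mn$. Starting from $2mn-1$ integers, I repeatedly pull out disjoint length-$m$ zero-sum subsequences $T_{1},T_{2},\dots$: this is possible as long as at least $2m-1$ elements remain, and a short bookkeeping count shows one can extract $2n-1$ of them before running out. Writing $s_{j}=\frac{1}{m}\sum_{x\in T_{j}}x\in\mathbb{Z}$, I apply the case $n$ to the $2n-1$ integers $s_{1},\dots,s_{2n-1}$ to obtain an index set $I$ of size $n$ with $\sum_{i\in I}s_{i}\equiv0\pmod{n}$. Then $\bigcup_{i\in I}T_{i}$ has length $mn$ and sum $m\sum_{i\in I}s_{i}$, which is divisible by $mn$. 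Factoring $n$ into primes and iterating completes the proof.

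The main obstacle is really the prime case: once Chevalley--Warning is available it reduces to a one-line degree count, but this is where the arithmetic structure of $\mathbb{Z}/p\mathbb{Z}$ is genuinely used (an alternative route is the Cauchy--Davenport inequality on sumsets). The induction step is elementary, but one must be careful with the extraction count to guarantee exactly $2n-1$ disjoint blocks.
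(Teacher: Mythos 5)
Your proof is correct. The paper does not prove this statement at all --- it is quoted as the classical Erd\H{o}s--Ginzburg--Ziv theorem with a citation to \cite{EGZ} --- so there is no in-paper argument to compare against; what you give is the standard two-step proof. The prime case via Chevalley--Warning applied to $f_{1}=\sum x_{i}^{p-1}$ and $f_{2}=\sum a_{i}x_{i}^{p-1}$ is airtight: the degree count $2(p-1)<2p-1$ gives a nonzero common root, $f_{1}=0$ forces exactly $p$ nonzero coordinates, and $f_{2}=0$ gives the zero sum. The extraction count in your multiplicative step also checks out: after removing $j\le 2n-2$ disjoint $m$-blocks, $2mn-1-jm\ge 2m-1$ elements remain, so $2n-1$ blocks can be extracted, and applying the case $n$ to the normalized block sums $s_{j}$ finishes it. It is worth noting that this multiplicative descent is exactly the mechanism the paper does use in its own arguments (Lemma \ref{lem:subadditive} and Lemma \ref{lem:inductivegeneral}), and the polynomial-method flavor of your prime case is the same one underlying the paper's proof of Theorem \ref{thm:sets} via Lemma \ref{lem:ronyai}, so your proof is stylistically consistent with the rest of the paper even though the paper itself omits it.
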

The natural generalization of this result is the study of sequences
over finite abelian groups which are guaranteed to have zero-sum subsequences
of some prescribed length.

We use the notation $[a,b]$ to denote the set of all positive integers
$\{a,a+1,\ldots,b\}$ between $a$ and $b$ inclusive.

Let $(G,+)$ be a finite abelian group written additively. Then, we
write $|G|$ for the size of $G$ and $\exp(G)$ for the exponent
of $G$, i.e. the largest order of any element of $G$. A sequence
$S$ over $G$ will be written multiplicatively in the form 
\[
S=g_{1}g_{2}\cdots g_{\ell}=\prod_{g\in G}g^{\mathsf{v}_{g}(S)},
\]
with $\mathsf{\mathsf{v}}_{g}(S)\geq0$ being the number of times
that $g$ appears in $S$.

With these definitions, we call 
\[
|S|=\sum_{g\in G}\mathsf{v}_{g}(S)
\]
the \emph{length} of $S$ and 
\[
\sigma(S)=\sum_{g\in G}\mathsf{v}_{g}(S)\cdot g
\]
the \emph{sum} of $S$ (which is an element of $G$). A sequence $S$
is zero-sum if $\sigma(S)=0$.

Throughout, we write $C_{m}$ for the cyclic group of order $m$ and
implicitly identify it with $\mathbb{Z}/m\mathbb{Z}$.

We say that $T$ is a \emph{subsequence }of $S$, written $T|S$,
if $\mathsf{v}_{g}(T)\leq\mathsf{v}_{g}(S)$ for every $g\in G$.

Following Gao and Thangadurai \cite{GT}, we define $s_{k}(G)$ to
be the smallest positive integer $\ell$ for which any sequence $S$
of length $\ell$ over $G$ contains a zero-sum subsequence of length
$k$. Usually we will only be concerned with the case where $\exp(G)|k$;
it is easy to check that if $\exp(G)\nmid k$ then $s_{k}(G)=\infty$.
Theorem \ref{thm:EGZ} proved that $s_{n}(C_{n})=2n-1$, where $C_{n}$
is the cyclic group of order $n$. The case $G=C_{n}^{d}$ and $k=n$
was first studied by Harborth \cite{Harborth}. 

It will henceforth be implicitly understood that tight lower bounds
on all of these quantities $s_{k}(G)$ can be proved by construction
and it suffices to prove tight upper bounds.

In the two-dimensional case it was first conjectured by Kemnitz \cite{Kemnitz}
that $s_{n}(C_{n}^{2})=4n-3$. Alon and Dubiner \cite{AlonDubiner,AlonDubiner2}
obtained the first linear bounds of the form $s_{n}(C_{n}^{2})\leq6n-5$.
Later Rónyai \cite{Ronyai} showed for primes $p$, $s_{p}(C_{p}^{2})\leq4p-2$,
and the full Kemnitz conjecture was resolved by Reiher \cite{Reiher}.
All of these results follow from algebraic considerations close to
the Chevalley-Warning theorem.

In this paper we are primarily interested in finite abelian $p$-groups,
for which the following conjecture has been made.
\begin{conjecture}
\label{mainconj}\cite{GT,Kubertin} Let $p$ be a prime and let $G$
be a finite abelian $p$-group with $\exp(G)=q$. Then for any $k\ge1$,
we have $s_{kq}(G)=kq+D(G)-1$.
\end{conjecture}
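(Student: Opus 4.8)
Since tight lower bounds for these quantities come from constructions, I would aim only at the upper bound $s_{kq}(G)\le kq+D(G)-1$; the matching lower bound is witnessed by concatenating a zero-sum free sequence of length $D(G)-1$ with $kq-1$ copies of $0$, which has no zero-sum subsequence of length exactly $kq$. The plan for the upper bound is to run the algebraic method inside the group algebra $R=\mathbb{F}_p[G]$, with basis $\{e_g:g\in G\}$. The feature that makes $p$-groups tractable is that the augmentation ideal $\mathfrak a=\ker\big(\epsilon\colon R\to\mathbb{F}_p\big)$ is nilpotent with a precisely known top degree: writing $G\cong\prod_{i=1}^r C_{p^{a_i}}$ with generators $h_i$ and setting $w_i=e_{h_i}-e_0$, one has $R\cong\mathbb{F}_p[w_1,\dots,w_r]/(w_1^{p^{a_1}},\dots,w_r^{p^{a_r}})$, so that $\mathfrak a^{D(G)}=0$ while $\mathfrak a^{D(G)-1}\ne0$, using the Olson identity $D(G)=1+\sum_i(p^{a_i}-1)$ valid for $p$-groups. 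This is the bridge that lets the Davenport constant govern an algebraic degree count.

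Given $S=g_1\cdots g_\ell$ with $\ell=kq+D(G)-1$, I would encode all subsequence sums in the generating polynomial
\[ F(t)=\prod_{i=1}^{\ell}\big(1+t\,e_{g_i}\big)=\sum_{I\subseteq[\ell]}t^{|I|}\,e_{\sigma(I)}\in R[t],\qquad \sigma(I)=\textstyle\sum_{i\in I}g_i, \]
and apply the $\mathbb{F}_p$-linear functional $\lambda\colon R\to\mathbb{F}_p$ that reads off the coefficient of $e_0$. Since $e_{\sigma(I)}=e_0$ exactly when $I$ is zero-sum, the coefficient of $t^{m}$ in $\lambda(F(t))$ equals, modulo $p$, the number of zero-sum subsequences of length $m$, and it suffices to prove that the $t^{kq}$-coefficient is nonzero in $\mathbb{F}_p$. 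To do so I would expand $F$ in powers of the $w_i$, truncate using $\mathfrak a^{D(G)}=0$, and reduce the non-vanishing to a Chevalley--Warning / Combinatorial-Nullstellensatz-type statement in which the $\ell$ free binary choices (whether or not $i\in I$) are played off against the degree budget $D(G)-1$ supplied by the nilpotency bound. The surplus $\ell-kq=D(G)-1$ in the length is precisely the room such a count needs.

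The step I expect to be the genuine obstacle is not the zero-sum condition --- which the group algebra captures exactly, including its full mod-$p^m$ content --- but certifying that the single coefficient of $t^{kq}$ in $\lambda(F(t))$ is nonzero \emph{modulo $p$}. The available inputs of Chevalley--Warning and Combinatorial-Nullstellensatz type see the polynomial $\lambda(F(t))$ essentially as a whole, or control aggregates of its coefficients, whereas the conjecture pins down one designated length among the admissible multiples $q,2q,\dots$; moreover the true count could vanish modulo $p$ even when it is positive, so the argument must be arranged so that the surplus $D(G)-1$ actively protects the targeted term. This is where the hypotheses $k\ge d=\lceil D(G)/q\rceil$ and the lower bound on $p$ are forced: as $d$ grows the degree accounting weakens and isolating $t^{kq}$ from the parasitic terms costs more, which is exactly why Kubertin's method yields only the quadratic overhead $Cd^2q$ and why sharpening it to the linear order claimed in the abstract is the quantitative crux. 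That some such largeness is unavoidable is clear, since the target equality cannot persist for the very smallest $k$ once $d\ge2$ --- Reiher's $s_p(C_p^2)=4p-3$ already exceeds the value $3p-2$ predicted at $k=1$ --- so any correct proof must use $kq\ge D(G)$ in an essential way.
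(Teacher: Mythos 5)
The statement you were asked to prove is an open conjecture, and the paper does not prove it either: it is quoted from \cite{GT,Kubertin} and the paper only establishes partial results toward it, namely the exact value for $k$ divisible by $p$ (Lemma \ref{lem:pcase}), an upper bound with additive error $ (2d-2)q+3D(G)-3$ for $k\ge d$ (Theorem \ref{thm:mainbound}), and the exact value for $k\ge p+d$ (Corollary \ref{cor:lbound2}). Your proposal, by your own account, is not a proof: the entire difficulty is concentrated in the step you flag as ``the genuine obstacle,'' namely certifying that the coefficient of $t^{kq}$ in $\lambda(F(t))$ is nonzero modulo $p$, and you offer no mechanism for doing so. Everything before that point is correct but standard --- the isomorphism $\mathbb{F}_p[G]\cong\mathbb{F}_p[w_1,\dots,w_r]/(w_1^{p^{a_1}},\dots,w_r^{p^{a_r}})$, the nilpotency $\mathfrak a^{D(G)}=0$ with $\mathfrak a^{D(G)-1}\ne 0$, and the generating function $F(t)$ are exactly the ingredients of Olson's proof that $D(G)=1+\sum(p^{a_i}-1)$ --- but they only show that \emph{some} nonempty zero-sum subsequence exists once $\ell\ge D(G)$; isolating the single length $kq$ is where all known arguments (R\'onyai's, Kubertin's, and the present paper's polynomial method over $\{0,1\}^m$ with the binomial-coefficient factors $P_L,P_S,P_K$) pay a real price, and it is why the paper obtains only $k\ge d$ with an additive loss rather than the conjectured equality. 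So there is a genuine gap, and it sits precisely at the step the conjecture is open on.

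Two smaller points. First, your observation that the conjecture cannot hold as literally stated for all $k\ge 1$ is correct and consistent with the paper: Theorem \ref{thm:gao} makes the inequality strict when $kq<D(G)$ (e.g.\ $s_p(C_p^2)=4p-3>3p-2$), so the intended reading, as in the abstract, carries the hypothesis $kq\ge D(G)$. Second, if you wanted to salvage something provable from your framework, the natural target is Lemma \ref{lem:pcase}: there the paper avoids the coefficient-isolation problem entirely by adjoining a coordinate in $C_{pq}$ and invoking $D(G\oplus C_{pq})=pq+D(G)-1$, which converts ``zero-sum of length exactly $pq$'' into ``zero-sum in a larger $p$-group'' --- a trick your group-algebra setup could reproduce verbatim, but which does not extend to general $k$.
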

Here $D(G)$ denotes the Davenport constant of $G$, which is the
shortest length $\ell$ for which any sequence $S$ of length $\ell$
has some zero-sum subsequence. For a $p$-group of the form 
\[
G=\bigoplus_{i=1}^{d}C_{p^{\alpha_{i}}},
\]
the value was determined by Olson to be $D(G)=1+\sum(p^{\alpha_{i}}-1)$
\cite{Olson}. Conjecture \ref{mainconj} has been proved when $G$
has rank at most $2$, see for instance Theorem 6.13 in the survey
of Gao and Geroldinger \cite{GG}. 

It will be useful to define, using notation from Geroldinger, Grynkiewicz
and Schmid \cite{GGS}, for any set $K$ of positive integers the
value $s_{K}(G)$ to be the shortest length $\ell$ for which any
sequence $S$ of length $\ell$ over $G$ contains a zero-sum subsequence
with length in $K$. Geroldinger, Grynkiewicz and Schmid were interested
in the case that $K$ is an infinite arithmetic progression $a\mathbb{N}$,
but we will primarily work with finite sets $K$. Our first main result
gives a bound on $s_{K}(G)$ when $p$ is prime and $|K|\geq d$.
The same result was proved for $G=C_{q}^{d}$ by Kubertin \cite{Kubertin},
where $q$ is a power of $p$, and our argument is essentially identical.

Henceforth we write $d=\Big\lceil\frac{D(G)}{q}\Big\rceil$, which
we think of as the \emph{dimension }of $G$ as a $p$-group. Note
that in the case that $G=C_{q}^{d}$ and $q\ge d$ the two uses of
$d$ agree.
\begin{thm}
\label{thm:sets}Let $p$ be a prime and let $G$ be a finite abelian
$p$-group with $\exp(G)=q$ and $\Big\lceil\frac{D(G)}{q}\Big\rceil=d$.
If $K\subseteq[1,p]$ is a set satisfying $|K|\ge d$ , then 
\[
s_{Kq}(G)\le(\max K+1-|K|)q+D(G)-1,
\]
where $Kq=\{kq:k\in K\}$. 
\end{thm}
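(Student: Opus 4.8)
The plan is to use the polynomial (Chevalley–Warning-style) method that underlies all the results cited in the introduction, adapted to produce a zero-sum subsequence whose length lies in the prescribed set $Kq$. First I would reduce to the case $G = \bigoplus_{i=1}^{d} C_{p^{\alpha_i}}$ with $\sum(p^{\alpha_i}-1) = D(G)-1$ and embed the group coordinates as residues, so that a sequence $S = g_1 \cdots g_\ell$ of length $\ell = (\max K + 1 - |K|)q + D(G) - 1$ gives rise to indeterminates $x_1, \ldots, x_\ell$, one per term. The zero-sum condition $\sum x_j g_j = 0$ in $G$ translates, coordinate by coordinate and after the usual lifting of each $C_{p^{\alpha_i}}$-coordinate to a system of congruences modulo $p, p^2, \ldots, p^{\alpha_i}$, into a family of polynomial equations whose total degree I can control. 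The length condition $|T| \in Kq$ should be enforced by a separate polynomial: since $K \subseteq [1,p]$, the set $Kq$ consists of at most $p$ distinct multiples of $q$, and I can encode membership of $\sum_{j} x_j$ in $Kq$ by a polynomial of degree $|K|$ in the $x_j$ vanishing exactly on the forbidden residues.

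Next I would assemble these into a single system over $\mathbb{F}_p$ and invoke Chevalley–Warning: the trivial solution $x_j = 0$ for all $j$ (the empty subsequence) is always present, so if the total degree of the system is strictly less than the number of variables $\ell$, a second solution must exist, and this second solution — suitably interpreted, using that the $x_j$ range over $\{0,1\}$-type indicator variables via a Fermat-style restriction $x_j^p = x_j$ — yields a nonempty zero-sum subsequence of the desired length. The crux of the degree count is the following: the Davenport-type congruences contribute total degree roughly $\sum \alpha_i(p-1)$, but by organizing the lift cleverly (working modulo the successive powers of $p$ and weighting) one gets a contribution of order $D(G)-1$ rather than something larger; the length-restriction polynomial contributes degree $|K|$ scaled appropriately by $q$; and the Fermat conditions are handled by passing to the reduced polynomial. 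Matching $\ell$ against this degree bound is where the precise shape $(\max K + 1 - |K|)q + D(G)-1$ comes from, with the term $\max K$ governing how large the length can be and $|K| \ge d$ being exactly what is needed to make the variable count exceed the degree.

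The main obstacle I expect is the degree bookkeeping in the presence of the length constraint: one must simultaneously control the degree coming from the $d$ coordinate-equations (which wants length about $D(G)-1$) and from the single length-polynomial of degree scaling with $|K|$, while ensuring the Fermat reduction $x_j^p \mapsto x_j$ does not destroy the strict inequality between degree and number of variables. The interplay between $\max K$ and $|K|$ is delicate precisely because enlarging $K$ both raises the degree of the length-polynomial and relaxes the target set; the inequality $|K| \ge d$ is what guarantees there is enough room. Since the problem statement tells me the argument is \emph{essentially identical} to Kubertin's proof for $G = C_q^d$, I would follow her setup closely, the only genuinely new point being to replace the uniform exponent $q = p^\alpha$ by the mixed exponents $p^{\alpha_i}$ and to verify that Olson's formula $D(G) = 1 + \sum(p^{\alpha_i}-1)$ makes the degree count go through verbatim with $D(G)$ in place of $d(q-1)+1$.
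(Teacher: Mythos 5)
Your high-level instinct (a polynomial, degree-counting argument following Kubertin) is in the right family, but the specific machinery you propose does not work, and the paper's proof is built on a different device. Chevalley--Warning over $\mathbb{F}_p$ can only see conditions modulo $p$: neither ``$\sigma(T)=0$ in $C_{p^{\alpha_i}}$ with $\alpha_i>1$'' nor ``$|T|\in Kq$'' is expressible as a polynomial equation over $\mathbb{F}_p$ in indicator variables, and your proposed ``lifting to congruences modulo $p,p^2,\ldots$'' has no polynomial realization in that framework. The actual proof works with \emph{integer-valued} polynomials $\binom{Q(\mathbf{x})}{n}$ evaluated on $\{0,1\}^m$ and uses Lucas' theorem twice: $\binom{y-1}{q_j-1}\equiv 1\pmod p$ iff $q_j\mid y$ detects divisibility by a prime \emph{power}, and $\binom{\ell q}{q}\equiv\ell\pmod p$ turns the length condition into the product $\prod_{\ell\in[1,\max K]\setminus K}\bigl(\binom{\sum_i x_i}{q}-\ell\bigr)$. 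The conclusion is then drawn not from Chevalley--Warning but from R\'onyai's multilinear-basis argument (Lemma \ref{lem:ronyai}): the product $P=P_LP_SP_K$ vanishes mod $p$ off $\mathbf{0}$, hence agrees as a function on $\{0,1\}^m$ with $C\prod_i(1-x_i)$ plus a multiple of $p$, forcing reduced degree at least $m$, which contradicts $\deg P\le m-1$.

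Two of your bookkeeping claims are also wrong in ways that matter. The length polynomial excludes the \emph{forbidden} values $\ell\in[1,\max K]\setminus K$, so its degree is $(\max K-|K|)q$ --- it \emph{decreases} as $K$ grows, the opposite of what you assert; this is precisely why the bound improves with $|K|$. And the hypothesis $|K|\ge d$ is not what ``makes the variable count exceed the degree'' (that inequality holds automatically from the choice of $m$); it is needed to guarantee $m<(\max K+1)q$, so that a surviving zero-sum subsequence cannot have length $\ell q$ with $\ell\in[\max K+1,p]$ and must therefore land in $Kq$. As written, your plan would stall at the point of encoding the higher-power congruences and the exact-length condition.
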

Gao, Han, Peng, and Sun \cite{Gao1,GHPS} have shown the following
preliminary results in the most general setting, assuming nothing
about $G$. 
\begin{thm}
\label{thm:gao}\cite{Gao1,GHPS} Let $G$ be a finite abelian group
with $\exp(G)=n$. Then for any $k\ge1$, we have $s_{kn}(G)\geq kn+D(G)-1$.
If $kn\geq|G|$, then equality holds, whereas if $kn<D(G)$, then
the inequality is strict. 
\end{thm}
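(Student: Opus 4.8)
The plan is to prove Theorem~\ref{thm:gao}, which splits into three assertions: a general lower bound $s_{kn}(G)\ge kn+D(G)-1$, equality when $kn\ge|G|$, and strict inequality when $kn<D(G)$.

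\textbf{The lower bound.} First I would produce an explicit sequence of length $kn+D(G)-2$ with no zero-sum subsequence of length exactly $kn$. By the definition of the Davenport constant $D(G)$, there is a sequence $U$ of length $D(G)-1$ over $G$ with \emph{no} nonempty zero-sum subsequence at all. The plan is to append to $U$ a string of $kn-1$ copies of the identity element $0\in G$, giving a sequence $S$ of length $(kn-1)+(D(G)-1)=kn+D(G)-2$. Any length-$kn$ subsequence $T$ of $S$ must use at least $kn-(kn-1)=1$ element from $U$, so $T$ contains a nonempty subsequence $T'$ drawn from $U$; since $\sigma(T)=\sigma(T')$ (the appended zeros contribute nothing), a zero-sum $T$ would force $T'$ to be a nonempty zero-sum subsequence of $U$, contradicting the choice of $U$. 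Hence $s_{kn}(G)\ge kn+D(G)-1$.

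\textbf{The strict inequality when $kn<D(G)$.} Here I would argue that the lower bound cannot be tight by exhibiting a longer zero-sum-free-in-length-$kn$ sequence, or equivalently by showing every length-$(kn+D(G)-1)$ sequence still fails to be forced. The cleanest route is to note that when $kn<D(G)$ there exists a zero-sum-free sequence whose length is large relative to $kn$: take the same $U$ of length $D(G)-1$, and observe that \emph{every} subsequence of $U$ of length $kn\le D(G)-1$ is nonempty and hence not zero-sum, while padding with zeros as above lets us reach length $kn+D(G)-1$ without creating any length-$kn$ zero-sum subsequence. Making this padding count carefully should yield a counterexample of length exactly $kn+D(G)-1$, proving $s_{kn}(G)>kn+D(G)-1$.

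\textbf{Equality when $kn\ge|G|$.} This is the substantive direction: I must show the upper bound $s_{kn}(G)\le kn+D(G)-1$ under the hypothesis $kn\ge|G|$. The natural strategy is an iterative extraction argument. Given any sequence $S$ with $|S|\ge kn+D(G)-1$, I would repeatedly pull out short zero-sum subsequences: by the definition of $D(G)$, any block of $D(G)$ consecutive (unused) elements contains a nonempty zero-sum subsequence, and one checks that the minimal such subsequence has length at most $\exp(G)=n$. Extracting these one at a time produces a collection of zero-sum blocks each of length in $[1,n]$; the total length extractable before fewer than $D(G)$ elements remain is at least $|S|-(D(G)-1)\ge kn$. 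The goal is then to select, from these zero-sum blocks of lengths in $[1,n]$, a sub-collection whose lengths sum to exactly $kn$. This is where the hypothesis $kn\ge|G|$ enters: it guarantees enough total extracted length, and one uses a counting or pigeonhole argument on the achievable partial sums (each block length lying in $[1,n]$, so partial sums advance by at most $n$ and must hit the multiple $kn$ of $n$ exactly). I expect this last combinatorial step---assembling zero-sum blocks of bounded length into one of length precisely $kn$---to be the main obstacle, and I would resort to the cited arguments of Gao, Han, Peng, and Sun \cite{Gao1,GHPS} to handle the delicate case analysis needed when the extracted block lengths do not immediately sum to a multiple of $n$.
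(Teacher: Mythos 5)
First, note that the paper does not prove this statement at all: it is quoted from \cite{Gao1,GHPS} and used as a black box, so there is no internal proof to compare against. Your construction for the lower bound $s_{kn}(G)\ge kn+D(G)-1$ (a zero-sum free sequence $U$ of length $D(G)-1$ padded with $kn-1$ zeros) is correct and standard. However, your argument for strictness when $kn<D(G)$ breaks down as described: to reach total length $kn+D(G)-1$ from $U$ you must append $kn$ copies of $0$, and then $0^{kn}$ is itself a zero-sum subsequence of length exactly $kn$, so the padded sequence is not a counterexample. The repair is to replace $U$ by the \emph{minimal} zero-sum sequence $W=U\cdot(-\sigma(U))$ of length $D(G)$ (minimal because a proper nonempty zero-sum subsequence of $W$ would produce a nonempty zero-sum subsequence of $U$), and take $S=W\cdot 0^{kn-1}$ of length $kn+D(G)-1$; since $kn<D(G)$, any length-$kn$ subsequence meets $W$ in a nonempty \emph{proper} subsequence, whose sum is nonzero by minimality.

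The equality claim for $kn\ge|G|$ is the substantive half, and your sketch contains a false step: it is not true that a block of $D(G)$ elements contains a nonempty zero-sum subsequence of length at most $\exp(G)=n$. For $G=C_2^3$ one has $D(G)=4$ and $\exp(G)=2$, and the sequence $e_1e_2e_3(e_1+e_2+e_3)$ has a unique nonempty zero-sum subsequence, namely itself, of length $4>2$. So the greedy extraction does not yield blocks of length in $[1,n]$, and the pigeonhole assembly into a subsequence of length exactly $kn$ has nothing to work with. This direction is genuinely Gao's theorem $s_{|G|}(G)=|G|+D(G)-1$ together with its extension in \cite{GHPS}, whose proofs proceed quite differently (e.g.\ by normalizing with respect to a most frequently occurring element rather than by naive extraction). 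As written, your proposal ultimately defers this part to the cited papers, which matches how the present paper treats the theorem, but it does not constitute a proof.
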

From here they introduced the threshold function $\ell(G)$ defined
as the smallest posititve integer $\ell$ for which $s_{kn}(G)=kn+D(G)-1$
for any $k\geq\ell$. From Theorem \ref{thm:gao} we have 
\[
\frac{D(G)}{\exp(G)}\leq\ell(G)\leq\frac{|G|}{\exp(G)}.
\]

The lower bound is conjectured to be tight; our primary goals in this
paper are to bound the growth of $s_{k}(G)$ and in turn give a much
stronger upper bound on $\ell(G)$ for $p$-groups.

Using Theorem \ref{thm:sets} it is possible to prove the following
bound on $s_{kp}(G)$ when $G$ is a $p$-group. For comparison, Kubertin's
methods \cite{Kubertin} allow one to prove $s_{k}(C_{p}^{d})\leq(k+Cd^{2})p-d$
for some fixed constant $C>0$.
\begin{thm}
\label{thm:mainbound}Let $p$ be a prime, let $G$ be a finite abelian
$p$-group with $\exp(G)=q$ and $\Big\lceil\frac{D(G)}{q}\Big\rceil=d$.
If $p\ge2d+3\Big\lceil\frac{D(G)}{2q}\Big\rceil-3$, and $k\ge d$,
then
\[
s_{kq}(G)\le(k+2d-2)q+3D(G)-3.
\]

\end{thm}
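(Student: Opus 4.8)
The plan is to reduce the exact-length statement for $s_{kq}(G)$ to repeated applications of Theorem \ref{thm:sets}, peeling off disjoint zero-sum subsequences whose lengths are multiples of $q$ and then assembling a sub-collection of total length exactly $kq$. Concretely, I would first invoke Theorem \ref{thm:sets} with the set $K=[1,d]$, for which the bound reads $s_{[1,d]q}(G)\le q+D(G)-1$. Thus, as long as the untouched part of $S$ has length at least $q+D(G)-1$, we may extract a disjoint zero-sum subsequence of length $\ell q$ for some $\ell\in[1,d]$. Repeating this greedily produces a family of disjoint zero-sum blocks $B_{1},\dots,B_{r}$ of lengths $\ell_{1}q,\dots,\ell_{r}q$ with each $\ell_{i}\in[1,d]$, together with an untouched remainder of length at most $q+D(G)-2$. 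Recording only the integers $\ell_{i}\in[1,d]$ turns the problem into a one-dimensional subset-sum question: find $I\subseteq\{1,\dots,r\}$ with $\sum_{i\in I}\ell_{i}=k$, since any such $I$ yields the zero-sum subsequence $\prod_{i\in I}B_{i}$ of length exactly $kq$.

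Because each $\ell_{i}\le d$, the partial sums of the $\ell_{i}$ move in steps of size at most $d$; hence, provided $\sum_{i}\ell_{i}$ is large enough, one can select a sub-collection whose length lands in any prescribed window of $d$ consecutive multiples of $q$ below $kq$. I would use this to drive the residual deficit $\delta q:=kq-\sum_{i\in I}\ell_{i}q$ into the controllable range $\delta\in[d,2d-1]$, and then attempt to close it by a final application of Theorem \ref{thm:sets} to the unused material with the window $K'=[\delta-d+1,\delta]$, whose maximum is exactly $\delta$ so that no overshoot past $kq$ can occur. In this accounting the three reserved length thresholds---two for the greedy extraction and one for the final window, each of size $D(G)-1$---account for the $3D(G)-3$ term, while the two roundings, each costing at most $(d-1)q$, account for the $(2d-2)q$ term; the hypothesis $p\ge 2d+3\lceil D(G)/(2q)\rceil-3$ is precisely what guarantees that every window used is contained in $[1,p]$ and has size at least $d$, so that Theorem \ref{thm:sets} applies at each step, while $k\ge d$ ensures the first window is well-posed.

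The main obstacle is the \emph{exact} hitting of $kq$: subset sums of blocks whose sizes all coincide (for instance if every $\ell_{i}$ equals $d$) need not represent $k$ at all, so the naive greedy selection can stall at a residual deficit smaller than $d$ which a single further application of Theorem \ref{thm:sets} cannot remove, since its output only lies in an interval rather than at a prescribed value. Overcoming this is where the genuine work lies, and I expect to resolve it by choosing the extraction windows adaptively---sliding $K$ as the running deficit shrinks so that the multiset of produced block sizes is forced to contain a full interval of values---thereby guaranteeing exact representability of $k$ through a bounded-coin subset-sum lemma. The surplus encoded in the $(2d-2)q+3D(G)-3$ budget is exactly what buys enough blocks, of enough distinct sizes, to make such a lemma applicable. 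Once that combinatorial core is in place, the remaining steps---verifying the arithmetic of the length budget and checking the window constraints against $p\ge 2d+3\lceil D(G)/(2q)\rceil-3$---are routine.
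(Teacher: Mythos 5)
There is a genuine gap at exactly the point you flag as ``where the genuine work lies,'' and the adaptive-window idea you sketch cannot close it. Theorem \ref{thm:sets} only guarantees that \emph{some} zero-sum subsequence with length in $Kq$ exists; it gives you no control over \emph{which} element of $K$ is realized. So sliding the window $K$ as the deficit shrinks does not force the multiset of block sizes to contain a full interval of values: the sequence may stubbornly return the same length (say $dq$) for every window containing $d$, and then no subset of your blocks sums to $kq$ when $d\nmid k$. The same defect kills the final step, where you apply Theorem \ref{thm:sets} with $K'=[\delta-d+1,\delta]$ hoping to remove the residual deficit $\delta$ exactly; the theorem may hand you any length in $K'q$, leaving a smaller but still nonzero deficit. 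No bounded-coin subset-sum lemma can rescue a collection of coins all of the same denomination.

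The paper's resolution is structurally different and is the idea your proposal is missing. Split $S$ into two parts $S_1S_2$. In $S_2$, do not extract blocks greedily; instead range $K$ over \emph{all} $d$-subsets of the interval $[1,2d-2]$ and argue by contradiction: if fewer than $d-1$ of those lengths were achievable as zero-sum lengths in $S_2$, the unachievable ones would themselves form a valid $K$ for Theorem \ref{thm:sets}, which is absurd. Adding the empty sequence yields a set $L$ of $d$ \emph{distinct} achievable lengths. Then apply Theorem \ref{thm:sets} to $S_1$ with the complementary target set $kq-Lq$ (cardinality $d$), so that whichever length $(k-\ell)q$ is realized in $S_1$, it pairs with the achievable length $\ell q$ in $S_2$ to total exactly $kq$. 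This complementary-set pairing is what converts the inexact conclusion of Theorem \ref{thm:sets} into an exact hit at $kq$. The full proof also needs Lemma \ref{lem:half} (a version of Theorem \ref{thm:sets} valid for $|K|\ge d/2$) to make the length budget work in the range $k\in[d,2d-2]$, Lemma \ref{lem:2d} for $k\in[2d-1,p]$, and subadditivity for $k\ge 2d$; none of these appear in your outline.
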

When restricted to $G=C_{q}^{d}$, the Davenport constant is just
$D(G)=qd-d+1$, and the bound reduces to $s_{kq}(C_{q}^{d})\le(k+5d-2)q-3d$
when $2p\ge7d-3$ and $k\ge d$, achieving a bound linear in $d$
where Kubertin proved only a quadratic one. The third part of Theorem
\ref{thm:mainbound}, together with Olson's calculation of $D(G)$,
gives a new bound for $\ell(G)$ for $G$ a $p$-group.
\begin{cor}
\label{cor:lbound2}Let $p$ be a prime and let $G$ be a finite abelian
$p$-group with $\exp(G)=q$ and $\Big\lceil\frac{D(G)}{q}\Big\rceil=d$.
If $p\ge2d-2+\Big\lceil\frac{2D(G)-2}{q}\Big\rceil$, then $\ell(G)\leq p+d$.
That is, $s_{kq}(G)=kq+D(G)-1$ whenever $k\ge p+d$.
\end{cor}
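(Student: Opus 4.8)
The lower bound $s_{kq}(G)\ge kq+D(G)-1$ is immediate from Theorem \ref{thm:gao}, so the content is the matching upper bound for every fixed $k\ge p+d$. Since a zero-sum subsequence of length $kq$ inside a length-$(kq+D(G)-1)$ subsequence of $S$ is also one inside $S$, it suffices to treat a sequence $S$ with $|S|=kq+D(G)-1$ and produce a zero-sum subsequence of length exactly $kq$. Equivalently, writing $g=\sigma(S)$, I must find a subsequence $R\mid S$ with $|R|=D(G)-1$ and $\sigma(R)=g$, for then $SR^{-1}$ is zero-sum of length $kq$.

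The engine is Theorem \ref{thm:sets}. For an interval $K=[a,b]\subseteq[1,p]$ with $|K|=b-a+1\ge d$ one has $\max K+1-|K|=a$, so any sequence of length at least $aq+D(G)-1$ contains a zero-sum subsequence of length $jq$ for some $j\in[a,b]$. The plan is to build the target by repeated extraction of such blocks, tracking the running total $c$ of accumulated $q$-blocks and the deficit $r=k-c$. While $r>p$ I would extract with $K=[1,d]$: this is legitimate since $d\le p$ and, as must be checked, the untouched part of $S$ stays longer than $q+D(G)-1$ throughout, while each block has length in $[1,d]q\le rq$ and so never overshoots. Once $r\le p$ I switch to the targeted interval $K=[r-d+1,r]$, whose top $r\le p$ now lies in range and which again cannot overshoot. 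The threshold $k\ge p+d$ is exactly what lets the bulk phase drive $r$ into $[1,p]$ while leaving enough of $S$ for the final range-restricted extractions, and the hypothesis $p\ge 2d-2+\lceil(2D(G)-2)/q\rceil$ is what I expect to keep every intermediate length condition $|S'|\ge aq+D(G)-1$ valid.

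The main obstacle is the endgame. Theorem \ref{thm:sets} pins the length of an extracted block only to an interval of width $|K|-1\ge d-1$, so the procedure is only guaranteed to reach a deficit $r\in[0,d-1]$, and a residual deficit $r\in[1,d-1]$ cannot be closed by one more width-$d$ interval, whose top would have to be $r<d$. Resolving this is the crux, and it is where the numerical hypotheses are spent. The approach I would take is to arrange the bulk extractions so that the multiset of block-lengths already obtained admits a subset summing to exactly $k$: by mixing the intervals $[1,d]$ and $[2,d+1]$, whose tops differ by one, one aims to force two near-coprime block-lengths into the collection and then invoke a Chicken-McNugget/subset-sum argument, the bound on $p$ guaranteeing both that $[2,d+1]\subseteq[1,p]$ and that enough blocks are present for every target below the accumulated total to be representable. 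Granting this combinatorial step, the union of the selected blocks is a zero-sum subsequence of length exactly $kq$, which establishes the upper bound for each $k\ge p+d$ and hence $\ell(G)\le p+d$; substituting Olson's value $D(G)=1+\sum(p^{\alpha_i}-1)$ then rewrites the hypothesis in the stated closed form.
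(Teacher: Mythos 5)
Your proposal correctly reduces to the upper bound and correctly identifies the central difficulty, but it does not resolve it: the step you flag as ``the crux'' --- closing a residual deficit $r\in[1,d-1]$ so that the extracted blocks sum to \emph{exactly} $kq$ --- is exactly where the argument is left as a conditional (``Granting this combinatorial step\ldots''). The proposed fix is unlikely to work as sketched, because Theorem \ref{thm:sets} only guarantees a zero-sum block of length $jq$ for \emph{some} $j\in K$; you have no control over which $j$ is realized, so the multiset of block lengths is adversarial within the prescribed intervals. Mixing $K=[1,d]$ with $K=[2,d+1]$ therefore does not force two near-coprime block lengths into your collection (for instance, every extraction could return the top of its interval), and the subset-sum/Chicken--McNugget endgame is not available. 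Since the whole point of the corollary is the \emph{exact} value $kq+D(G)-1$, this gap is fatal to the argument as written.

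The paper sidesteps the exact-length problem entirely and the proof is three lines. Lemma \ref{lem:pcase} supplies one block of \emph{exact} length: $s_{pq}(G)=pq+D(G)-1$, proved not by interval extraction but by lifting $S$ to a sequence over $G\oplus C_{pq}$ and invoking Olson's formula $D(G\oplus C_{pq})=pq+D(G)-1$. Writing $k=p+(k-p)$ with $k-p\ge d$, subadditivity (Lemma \ref{lem:subadditive}) gives
\[
s_{kq}(G)\le\max\{s_{pq}(G)+(k-p)q,\ s_{(k-p)q}(G)\}
=\max\{kq+D(G)-1,\ (k-p+2d-2)q+3D(G)-3\},
\]
using Theorem \ref{thm:mainbound} for the second term, and the hypothesis $p\ge2d-2+\lceil(2D(G)-2)/q\rceil$ is precisely what makes the first term dominate. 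You should note that your outline never invokes Lemma \ref{lem:pcase} or Theorem \ref{thm:mainbound}, which are the two ingredients actually needed; any repair of your approach would in effect have to rediscover an exact-length mechanism of the kind Olson's theorem provides.
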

In the next section, we collect two well-known lemmas about the behavior
of $s_{k}(G)$, before proceeding to the proofs of Theorems \ref{thm:sets}
and \ref{thm:mainbound}. For more discussion of the implications
of Theorems \ref{thm:mainbound} and Corollary \ref{cor:lbound2}
on the general problem and on open questions, see the final section.

\section*{Preliminary Lemmas}

First, we show an easy sub-additivity result on $s_{k}(G)$ for general
$G$. Note that if $\exp(G)\nmid a$ or $\exp(G)\nmid b$ the following
lemma is vacuously true. 
\begin{lem}
\label{lem:subadditive}If $G$ is a finite abelian group and $a,b\in\mathbb{N}$,
then 
\[
s_{a+b}(G)\leq\max\{s_{a}(G)+b,s_{b}(G)\}.
\]
\end{lem}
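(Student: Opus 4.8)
The plan is to reduce immediately to the essential case and then argue by a two-step greedy extraction. First I would dispose of the degenerate situations: if $\exp(G)\nmid a$ or $\exp(G)\nmid b$, then (as noted in the introduction) the corresponding quantity $s_a(G)$ or $s_b(G)$ equals $\infty$, so the right-hand side is $\infty$ and the inequality holds trivially. Thus I may assume $\exp(G)\mid a$ and $\exp(G)\mid b$, so that both $s_a(G)$ and $s_b(G)$ are finite.

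Now set $\ell=\max\{s_a(G)+b,\,s_b(G)\}$ and let $S$ be an arbitrary sequence over $G$ with $|S|\ge\ell$; the goal is to produce a zero-sum subsequence of length exactly $a+b$. The key observation is that the two terms in the maximum are exactly the two hypotheses needed to run the extraction in the correct order. Since $|S|\ge s_b(G)$, the definition of $s_b(G)$ guarantees a zero-sum subsequence $T_2\mid S$ with $|T_2|=b$. Deleting it yields the sequence $S'=S\cdot T_2^{-1}$ (formally, $\mathsf{v}_g(S')=\mathsf{v}_g(S)-\mathsf{v}_g(T_2)\ge 0$) of length $|S'|=|S|-b\ge s_a(G)$, where the last inequality uses $|S|\ge s_a(G)+b$. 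Applying the definition of $s_a(G)$ to $S'$ produces a zero-sum subsequence $T_1\mid S'$ with $|T_1|=a$. Then $T_1T_2\mid S$, its length is $a+b$, and $\sigma(T_1T_2)=\sigma(T_1)+\sigma(T_2)=0$, as desired.

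There is no serious obstacle here; the only point requiring care is the \emph{order} of extraction. Extracting the length-$b$ piece first is essential: removing $b$ elements and still retaining at least $s_a(G)$ is precisely what the bound $|S|\ge s_a(G)+b$ supplies, whereas reversing the roles would require the unavailable estimate $|S|\ge s_b(G)+a$. Highlighting this asymmetry clarifies why the max, rather than a symmetric expression, appears in the statement.
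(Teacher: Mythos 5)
Your proposal is correct and follows essentially the same argument as the paper: extract a zero-sum subsequence of length $b$ first using $|S|\ge s_b(G)$, then extract a zero-sum subsequence of length $a$ from the remainder using $|S|-b\ge s_a(G)$, and concatenate. Your additional remarks on the degenerate case and on the order of extraction are accurate but not needed beyond what the paper records.
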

\begin{proof}
Suppose that $S$ is a sequence over $G$ with $|S|\geq s_{a}(G)+b$
and $|S|\geq s_{b}(G)$. By the latter inequality, $S$ has a zero-sum
subsequence $S_{1}$ of length $b$. By the former inequality, $SS_{1}^{-1}$
has a zero-sum subsequence $S_{2}$ of length $a$. It follows that
any given $S$ with the stated length contains a zero-sum subsequence
$S_{1}S_{2}$ of length $a+b$, as desired. 
\end{proof}
Also we will need an easy special case of Conjecture \ref{mainconj},
which follows directly from a result of Geroldinger, Grynkiewicz and
Schmid \cite{GGS}. We include a quick proof for convenience, using
the result of Olson \cite{Olson} which determines the value of the
$D(G)$ for all $p$-groups. 
\begin{lem}
\label{lem:pcase}Let $p$ be a prime and let $G$ be a finite abelian
$p$-group with $\exp(G)=q$ and $\Big\lceil\frac{D(G)}{q}\Big\rceil=d$.
If $p\ge d$ , then $s_{kpq}(G)=kpq+D(G)-1$ for any integer $k\ge1$.\end{lem}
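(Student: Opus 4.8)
The plan is to prove the two inequalities separately. The lower bound $s_{kpq}(G)\ge kpq+D(G)-1$ is immediate from Theorem \ref{thm:gao}: applying that result with the integer $kp$ in place of $k$ and with $n=\exp(G)=q$ gives $s_{(kp)q}(G)\ge (kp)q+D(G)-1$. So the entire content of the lemma is the matching upper bound $s_{kpq}(G)\le kpq+D(G)-1$.

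For the upper bound I would first reduce to the single case $k=1$, that is, to the base estimate $s_{pq}(G)\le pq+D(G)-1$. Granting this, the general case follows by induction on $k$ using the sub-additivity of Lemma \ref{lem:subadditive}. Writing $D=D(G)$ and taking $a=(k-1)pq$, $b=pq$ (so $a+b=kpq$), we obtain for $k\ge 2$
\[
s_{kpq}(G)\le\max\{s_{(k-1)pq}(G)+pq,\ s_{pq}(G)\}\le\max\{(k-1)pq+D-1+pq,\ pq+D-1\}=kpq+D-1,
\]
where the middle inequality uses the inductive hypothesis together with the base case. Thus everything comes down to the base case, which is precisely the instance of Conjecture \ref{mainconj} with target length $pq$; this is the statement I would extract from Geroldinger, Grynkiewicz and Schmid \cite{GGS}.

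For the base case itself, the approach I would take is to work with zero-sum subsequences whose \emph{length is divisible by} $q$ rather than of a prescribed exact length. Using Olson's formula \cite{Olson} one has $D(G)\le dq$ with $d\le p$, and the first step is a bound of the shape $s_{q\mathbb N}(G)\le D(G)+q-1$: any sequence of length at least $D(G)+q-1$ contains a nonempty zero-sum subsequence of length a positive multiple of $q$, and in fact one of length at most $D(G)+q-1\le(d+1)q-1$. Extracting such blocks greedily from a sequence $S$ with $|S|=pq+D(G)-1$, until fewer than $D(G)+q-1$ elements remain, produces disjoint zero-sum subsequences $P_1,\dots,P_t$ of lengths $c_1q,\dots,c_tq$ with each $c_i\in[1,d]$ and $\sum_i c_i\ge p$. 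Greedily unioning blocks until the total first reaches $pq$ then assembles a single zero-sum subsequence $M$ whose length is a multiple of $q$ lying in $[pq,(p+d-1)q]$.

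The crux, and the step I expect to be the main obstacle, is passing from a zero-sum subsequence whose length is merely a multiple of $q$ to one of length \emph{exactly} $pq$: the block lengths $c_iq$ cannot in general be trimmed to total exactly $pq$ by a naive subset-sum argument on the multiplicities $c_i$ (for instance, equal blocks with $c_i=d$ and $d\nmid p$ can never sum to $p$), so one must use the internal structure of the blocks and not merely their lengths. This is exactly where the hypothesis $p\ge d$ enters: it forces the overshoot $|M|-pq$ to be a multiple of $q$ smaller than $pq$, with at most $d\le p$ units to remove, so that one can work modulo the \emph{prime} $p$ and excise a zero-sum sub-block of the correct length by an Erd\H{o}s--Ginzburg--Ziv type argument over $\mathbb{Z}/p\mathbb{Z}$. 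I would carry out this exact-length adjustment following the analysis of \cite{GGS}; it is the only place where anything beyond Theorem \ref{thm:gao}, Lemma \ref{lem:subadditive} and Olson's formula is required.
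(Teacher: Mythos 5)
Your outer framework (lower bound from Theorem \ref{thm:gao}, induction on $k$ via Lemma \ref{lem:subadditive}, reduction to the single estimate $s_{pq}(G)\le pq+D(G)-1$) matches the paper exactly. But the base case is the entire content of the lemma, and your proof of it has a genuine gap at precisely the point you flag as ``the crux.'' Your block-extraction scheme produces disjoint zero-sum blocks of lengths $c_iq$ with $c_i\in[1,d]$ and $\sum c_i\ge p$, and you correctly observe that no subset-sum argument on the $c_i$ can be made to hit $p$ exactly (all $c_i=d$ with $d\nmid p$ is a counterexample). At that point you say one must ``use the internal structure of the blocks'' and ``excise a zero-sum sub-block of the correct length by an Erd\H{o}s--Ginzburg--Ziv type argument over $\mathbb{Z}/p\mathbb{Z}$,'' deferring to \cite{GGS}. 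That is not an argument: EGZ over $\mathbb{Z}/p\mathbb{Z}$ would at best locate a sub-collection of blocks whose $c_i$ sum to a multiple of $p$ (and only if you had $2p-1$ blocks, which you do not), and it says nothing about excising from inside a block a piece that is simultaneously zero-sum in $G$ and of a prescribed length. The exact-length adjustment is the whole difficulty, and it is left unproved.

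The paper sidesteps this entirely with a short augmentation trick that you may want to internalize: given $S$ of length $pq+D(G)-1$, tag each term $g$ with a generator of $C_{pq}$ to form a sequence $S'$ over $G\oplus C_{pq}$. A zero-sum subsequence of $S'$ is exactly a zero-sum subsequence of $S$ whose length is divisible by $pq$. Since $G\oplus C_{pq}$ is still a $p$-group, Olson's theorem gives $D(G\oplus C_{pq})=pq+D(G)-1=|S'|$, so a nonempty such subsequence exists; and since $p\ge d$ forces $D(G)\le dq\le pq$, we have $|S|<2pq$, so its length is exactly $pq$. Note this is also where the hypothesis $p\ge d$ is actually used --- to guarantee $|S|<2pq$ --- which is a cleaner and more concrete role than the one you assign it. Your intermediate claim $s_{q\mathbb{N}}(G)\le D(G)+q-1$ is true, but it is proved by the very same augmentation device (with $C_q$ in place of $C_{pq}$), so even the part of your argument that works is most naturally established by the tool you are missing.
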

\begin{proof}
It suffices by Theorem \ref{thm:gao} and Lemma \ref{lem:subadditive}
to prove that $s_{pq}(G)\leq pq+D(G)-1$. Let $S$ be a sequence of
this latter length over $G$.

Let $1$ be a generator of $C_{pq}$. Construct a sequence $S'$ over
$G\oplus C_{pq}$ such that if $S=\prod_{g\in G}g^{\mathsf{v}_{g}(S)}$,
then $S'=\prod_{g\in G}(g,1)^{\mathsf{v}_{g}(S)}$. Then, a zero-sum
subsequence of $S'$ corresponds exactly to a zero-sum subsequence
of $S$ with length divisible by $pq$. Since $|S|=pq+D(G)-1<2pq$,
it follows that any such subsequence would have length exactly $pq$.

Thus, since $D(G\oplus C_{pq})=pq+D(G)-1$ by Olson's theorem \cite{Olson},
it follows that $S$ itself had a length $pq$ zero-sum subsequence,
as desired. 
\end{proof}
Combining Lemmas \ref{lem:subadditive} and \ref{lem:pcase}, it remains
to control the behavior of $s_{kq}(G)$ in the finite interval $k\in[d,p-1]$.

\section*{The Algebraic Method of Rónyai and Kubertin}

In this section we extend the algebraic method of Rónyai \cite{Ronyai},
who showed that $s_{2}(C_{p}^{2})\leq4p-2$ for all primes $p$, to
prove Theorem \ref{thm:sets}. Theorem \ref{thm:sets} was proved
for $C_{q}^{d}$ in the paper of Kubertin \cite{Kubertin}.

We require the following elementary result. It was proved for fields
by Rónyai \cite{Ronyai}, but we will also require the case $R=\mathbb{Z}$,
which is no additional work.
\begin{lem}
\label{lem:ronyai} Let $R$ be an integral domain and $m$ a positive
integer. Then the (multilinear) monomials $\prod_{i\in I}x_{i},I\subseteq[1,m]$,
constitute a basis for the free $R$-module $M$ of all functions
from $\{0,1\}^{m}$ to $R$. (Here $0$ and $1$ are viewed as elements
of $R$.)\end{lem}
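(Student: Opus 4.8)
The plan is to identify $M$ with $R^{2^m}$ via the \emph{evaluation map} and then realize the monomials as the columns of a triangular matrix. Since $M$ is by definition the module of all functions $\{0,1\}^m \to R$, such a function is the same data as its tuple of values, so indexing the $2^m$ points of $\{0,1\}^m$ by their supports (a point corresponds to the subset $I \subseteq [1,m]$ whose indicator it is) gives a canonical $R$-module isomorphism $M \cong R^{2^m}$, which also confirms that $M$ is free of rank $2^m$.

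Under this identification I would compute the image of each multilinear monomial $m_J := \prod_{i \in J} x_i$. Its value at the point indexed by $I$ is $\prod_{i \in J}(\mathbf{1}_I)_i$, which equals $1$ when $J \subseteq I$ and $0$ otherwise. Assembling the $2^m$ monomials as the columns of a $2^m \times 2^m$ matrix $A$ therefore yields $A_{I,J} = 1$ if $J \subseteq I$ and $A_{I,J} = 0$ otherwise, i.e. the zeta matrix of the Boolean lattice of subsets of $[1,m]$.

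The crucial step is to order the index set of subsets by any total order refining inclusion, for example by cardinality with ties broken arbitrarily. Because $A_{I,J} \neq 0$ forces $J \subseteq I$ and hence $J \preceq I$, the matrix $A$ is triangular in this order, and every diagonal entry is $A_{I,I} = 1$. Thus $\det A = 1$ is a unit of $R$, so $A$ is invertible over $R$ and its columns form an $R$-basis of $R^{2^m}$; transporting back through the evaluation isomorphism shows that the monomials $m_J$ form a basis of $M$.

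I expect no serious obstacle here: the only point needing care is choosing a linear extension of inclusion that makes $A$ visibly triangular, which reduces the whole statement to the observation that $\det A = 1$. One can even invert $A$ explicitly by M\"obius inversion on the Boolean lattice, the $(I,J)$ entry of $A^{-1}$ being $(-1)^{|I \setminus J|}$ for $J \subseteq I$ and $0$ otherwise; this simultaneously exhibits each standard delta-function as an integer combination of the monomials (spanning) and certifies their linear independence. Note that the argument uses only that $R$ is a commutative ring with identity, so it covers both $R = \mathbb{Z}$ and the field case used by R\'enyai; the integral-domain hypothesis is not essential.
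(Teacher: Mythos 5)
Your proof is correct, and it takes a genuinely different (though closely related) route from the paper's. The paper proves spanning directly: it writes a unit multiple of the indicator function of each point $\mathbf{y}\in\{0,1\}^m$ as the expanded product $(-1)^m\prod_{i=1}^m(x_i-1+y_i)$, concludes that the $2^m$ monomials generate the free module $M$ of rank $2^m$, and then invokes the standard fact that a generating set of a free module over a commutative ring whose cardinality equals the rank is automatically a basis --- that abstract fact is where linear independence comes from. You instead identify the evaluation matrix as the zeta matrix of the Boolean lattice, observe that it is unitriangular under any linear extension of inclusion, and get invertibility from $\det A=1$; this yields spanning and independence simultaneously, avoids the surjective-implies-injective fact for endomorphisms of finitely generated modules, and makes explicit that only a commutative ring with identity is needed (the integral-domain hypothesis is indeed superfluous, as you note, and the paper itself later applies the lemma with $R=\mathbb{Z}/p\mathbb{Z}$ and $R=\mathbb{Z}$). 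The two arguments are close cousins: your M\"obius-inversion formula for $A^{-1}$, with entries $(-1)^{|I\setminus J|}$, is exactly what one obtains by expanding the paper's product. A minor point in your favor: the paper's displayed polynomial equals the indicator of $\mathbf{y}$ only up to the sign $(-1)^{\sum_i y_i}$ (harmless for spanning, but stated slightly inaccurately), whereas your computation has no such wrinkle.
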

\begin{proof}
Since the indicator function of a given point $\textbf{y}=(y_{1},y_{2},\ldots,y_{m})\in\{0,1\}^{m}$
can be written as 
\[
p(\textbf{x})=(-1)^{m}\prod_{i=1}^{m}(x_{i}-1+y_{i}),
\]
and such a polynomial can be expanded into a $R$-linear combination
of the given monomials $\{\prod_{i\in I}x_{i},I\subseteq[1,m]\}$,
these monomials certainly generate $M$. But $M$ has rank $2^{m}$
so this set of generators is also a basis, as desired.
\end{proof}
Using this lemma, we can prove Theorem \ref{thm:sets}. 
\begin{proof}
(of Theorem \ref{thm:sets}) Let $G=C_{q_{1}}\oplus\cdots\oplus C_{q_{e}}$
with each $q_{i}=p^{m_{i}}$ for some $m_{i}>0$. Write $q=\exp(G)=\max(q_{i})$.
Let $S=\prod_{i=1}^{m}g_{i}$ be a sequence over $G$ with length
\begin{eqnarray*}
m & = & (\max K+1-|K|)q+D(G)-1\\
 & = & (\max K+1-|K|)q+\sum_{i=1}^{e}q_{i}-e.
\end{eqnarray*}
We show that, given any set $K$ of positive integers in $[1,p]$
with cardinality at least $d$, some zero-sum subsequence of $S$
has length in $Kq$. Suppose otherwise.

Working over the field $\mathbb{Q}$, we define the following polynomial
on $m$ variables $x_{1},x_{2},\ldots,x_{m}$. Write $\textbf{x}$
for the vector of all the $x_{i}$. By identifying $C_{q_{j}}$with
$\mathbb{Z}/q_{j}\mathbb{Z}$ and picking representatives $[0,q_{j}-1]\in\mathbb{Z}$
for this quotient, we let $a_{i}^{(j)}$ denote the representative
in $[0,q_{j}-1]$ for the $j$-th component of the $g_{i}\in S$,
where $i\in[1,m]$ and $j\in[1,e]$. Also, given a polynomial $Q(\textbf{x})\in\mathbb{Q}[\textbf{x}]$
and an integer $n\ge0$, we define 
\[
\binom{Q(\textbf{x})}{n}=\frac{Q(\textbf{x})(Q(\textbf{x})-1)\cdots(Q(\textbf{x})-(m-1))}{m!}\in\mathbb{Q}[\textbf{x}],
\]
with an empty product taken to be $1$. Then, define $P$ to be the
integer-valued polynomial

\texttt{
\[
P(\textbf{x})=P_{L}(\textbf{x})P_{S}(\textbf{x})P_{K}(\textbf{x}),
\]
}where 
\begin{eqnarray*}
P_{L}(\textbf{x}) & = & \binom{\sum_{i=1}^{m}x_{i}-1}{q-1}\\
P_{S}(\textbf{x}) & = & \prod_{j=1}^{e}\binom{\sum_{i=1}^{m}a_{i}^{(j)}x_{i}-1}{q_{j}-1}\\
P_{K}(\textbf{x}) & = & \prod_{\ell\in[1,\max K]\backslash K}\Big(\binom{\sum_{i=1}^{m}x_{i}}{q}-\ell\Big).
\end{eqnarray*}

If $\textbf{x}\in\{0,1\}^{m}$, then it uniquely indexes a subsequence
$T|S$ with the terms of $T$ precisely those $g_{i}$ for which $x_{i}=1$.
Now we show that $P$ vanishes except when $\textbf{x}=\textbf{0}$
(the null vector). Suppose $\textbf{x}\ne\textbf{0}$ and indexes
a sequence $T|S$. We repeatedly apply the classical result of Lucas,
which implies that for any power $q_{j}$ of $p$, 
\[
\binom{y-1}{q_{j}-1}\equiv\begin{cases}
1\pmod p & \mbox{if }q_{j}|y\\
0\pmod p & \mbox{otherwise},
\end{cases}
\]
and furthermore 
\[
\binom{\ell q_{j}}{q_{j}}\equiv\ell\pmod p,
\]
for any integers $y,\ell$. Thus the polynomial $P_{L}$ vanishes
modulo $p$ whenever $|T|$ is not a multiple of $q$, and the polynomial
$P_{S}$ vanishes modulo $p$ whenever $\sigma(T)\neq0$. Given that
$|T|$ is a multiple of $p$, the polynomial $P_{K}$ vanishes whenever
$|T|$ is exactly congruent to $\ell p\mod p^{2}$ where $\ell\in[1,\max K]\backslash K$.
It follows that the only possibility for $Q(\textbf{x})$ not to vanish
modulo $p$ is if $T$ is a zero-sum sequence with length congruent
to $\ell q$ for some $\ell\in K\cup[\max K+1,p]\pmod p$. But since
$|T|\le|S|=m$ and $m$ is constructed to be less than $(\max K+1)q$,
it follows that $|T|\in Kq$, contradiction.

We see that $P$ vanishes modulo $p$ on all vectors $\textbf{x}$
with the sole exception of the all-$0$'s vector. On that vector note
that according to Lucas' Theorem none of $P_{L},P_{S},P_{K}$ is zero.
Thus since $P$ is integer-valued, Lemma \ref{lem:ronyai} with $R=\mathbb{Z}/p\mathbb{Z}$
proves 
\[
P(\textbf{x})\equiv Q(\textbf{x})=C\prod_{i=1}^{m}(1-x_{i})+pQ_{1}(X)
\]
for some nonzero $C\in\mathbb{Z}$, not divisible by $p$, and some
integer-valued function $Q_{1}(X)$, as functions on $\{0,1\}^{m}$.
Furthermore, since $Q_{1}$ is integer-valued it is equal as a function
to some integer linear combination of monomials as in Lemma \ref{lem:ronyai}
with $R=\mathbb{Z}$, so we may as well take $Q_{1}\in\mathbb{Z}[\textbf{x}]$.
Finally since $C\not\equiv0\pmod p$ the top-degree term $\prod_{i\le m}x_{i}$
in $Q(\textbf{x})$ has a nonzero coefficient, so $\deg Q\ge m$. 

On the other hand, $P$ can be written as a linear combination of
basis monomials over $\mathbb{Q}$ in another way, simply by expanding
the product $P=P_{L}P_{S}P_{K}$ and applying the relation $x_{i}^{2}=x_{i}$
for functions on $\{0,1\}^{m}$. Both expansions represent $P$ in
terms of the basis from Lemma \ref{lem:ronyai} over $\mathbb{Q}$.
For these expressions to be equal, the degrees must equate; on the
other hand the second expression has degree at most $\deg P_{L}+\deg P_{S}+\deg P_{K}$.
When we compute this expression, we get 
\[
\deg P_{L}+\deg P_{S}+\deg P_{K}=(q-1)+(D(G)-1)+(\max K-|K|)q=m-1
\]
by the definition of $m$. This cannot agree with the degree of $Q$,
so we have a contradiction and the theorem is proved.
\end{proof}

\section*{Bounds on Small Lengths}

We now prove Theorem \ref{thm:mainbound}. We begin, of course, with
Theorem \ref{thm:sets} which gives us 
\[
s_{Kq}(G)\leq(\max K+1-|K|)q+D(G)-1
\]
whenever $|K|\geq d$ and $\max(K)\leq p$. As a first step, we obtain
a bound for $s_{K}(C_{p}^{d})$ when $|K|\geq d/2$, allowing for
$K$ half as large. 
\begin{lem}
\label{lem:half}Let $p$ be a prime and let $G$ be a finite abelian
$p$-group with $\exp(G)=q$ and $\Big\lceil\frac{D(G)}{q}\Big\rceil=d$.
If $K\subset\mathbb{N}$ is a finite set with $|K|\ge d/2$ and $2\max K+|K|\le p$,
then 
\[
s_{Kq}(G)\le(2\max K+1-|K|)q+D(G)-1.
\]
\end{lem}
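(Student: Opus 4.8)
The plan is to imitate the proof of Theorem \ref{thm:sets} almost verbatim, building the integer-valued polynomial $P=P_{L}P_{S}P_{K}$ on $\{0,1\}^{m}$ for a sequence $S$ of length $m=(2\max K+1-|K|)q+D(G)-1$ and deriving a contradiction from two incompatible expansions in the monomial basis of Lemma \ref{lem:ronyai}. First I would dispose of the easy range $|K|\ge d$: there the hypothesis $2\max K+|K|\le p$ gives $\max K\le p$, so Theorem \ref{thm:sets} applies directly and yields the stronger bound $(\max K+1-|K|)q+D(G)-1$, which implies the claim since $2\max K\ge\max K$. Hence I may assume $d/2\le|K|<d$. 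As before $P_{L}=\binom{\sum_{i}x_{i}-1}{q-1}$ and $P_{S}=\prod_{j}\binom{\sum_{i}a_{i}^{(j)}x_{i}-1}{q_{j}-1}$ have degrees $q-1$ and $D(G)-1$, and by Lucas' theorem they force any surviving $\textbf{x}\ne\textbf{0}$ to index a zero-sum subsequence $T$ whose length is a multiple of $q$; write $|T|=nq$.

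The degree bookkeeping is what makes the factor of two appear. Matching the total degree to $m-1$ as in Theorem \ref{thm:sets} leaves a budget of exactly $(2\max K-|K|)q$ for $P_{K}$, that is, room for $2\max K-|K|$ factors $\binom{\sum_{i}x_{i}}{q}-\ell$. This is precisely enough to annihilate every bad residue in $[1,2\max K]\setminus K$, of which there are $2\max K-|K|$. The hypothesis $2\max K+|K|\le p$ guarantees $2\max K<p$, so these $\ell$ together with the preserved values of $K\subseteq[1,\max K]$ are pairwise distinct modulo $p$; hence $P_{K}\equiv 0\pmod p$ whenever $n\not\equiv k$ for all $k\in K$, exactly as in the original argument. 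Choosing $P_{K}=\prod_{\ell\in[1,2\max K]\setminus K}\big(\binom{\sum_{i}x_{i}}{q}-\ell\big)$ thus makes $P$ vanish modulo $p$ on every $\textbf{x}$ indexing a zero-sum $T$ with $n\le 2\max K$ and $n\notin K$, while remaining nonzero at $\textbf{x}=\textbf{0}$.

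The obstacle, and the only place the argument departs from Theorem \ref{thm:sets}, is that the sequence is now long enough to realize lengths beyond $2\max K\cdot q$: since $\lfloor m/q\rfloor=2\max K-|K|+d$, a surviving zero-sum could have $n\in(2\max K,\,2\max K+d-|K|]$, a window of size $d-|K|\le d/2$ that $P_{K}$ has no remaining degree to cover. The crux is therefore to rule out these few large lengths. I expect to show that a zero-sum $T$ with $|T|=nq$ in this window already forces a zero-sum of length in $Kq$ — for instance by locating inside $T$ a complementary zero-sum $U$ whose length lies in $\{(n-k)q:k\in K\}$, so that $T\setminus U$ is a zero-sum of length in $Kq$ — or else that such a $T$ cannot occur under the standing assumption. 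Once these top residues are eliminated, $P$ vanishes modulo $p$ off the origin, so Lemma \ref{lem:ronyai} over $\mathbb{Z}/p\mathbb{Z}$ and then over $\mathbb{Z}$ forces $\deg Q\ge m$; comparing with the product expansion $\deg P\le\deg P_{L}+\deg P_{S}+\deg P_{K}=m-1$ gives the contradiction, exactly as in Theorem \ref{thm:sets}. Handling that size-$\le d/2$ window of large lengths is the step I expect to require the real work.
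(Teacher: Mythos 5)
Your architecture is essentially the paper's, but the one step you defer --- ``handling that size-$\le d/2$ window of large lengths is the step I expect to require the real work'' --- is precisely the content of the lemma, and you have not supplied it. Everything before that point (the modified $P_{K}$ over $[1,2\max K]\setminus K$, the degree count giving $m-1$, the observation that zero-sums of length $nq$ with $2\max K<n\le 2\max K+d-|K|$ survive the polynomial) is correct but is also just a re-derivation of Theorem \ref{thm:sets} applied to the augmented set $K'=K\cup\{2\max K+i:i\in[1,|K|]\}$, which is how the paper gets to the same place in two lines. What remains is a genuine gap, not a routine verification: you must show that a zero-sum $T$ with $|T|=nq$ and $n\ge 2\max K+1$ (and $n\le p+1$) already contains a zero-sum subsequence of length in $Kq$, and your proposal only gestures at ``locating inside $T$ a complementary zero-sum $U$'' without saying how, or hedges that ``such a $T$ cannot occur.''

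The missing idea is to apply Theorem \ref{thm:sets} \emph{to $T$ itself} with the symmetrized length set $L=K\cup(n-K)$. Because $n\ge 2\max K+1$, the sets $K$ and $n-K$ are disjoint, so $|L|=2|K|\ge d$ --- this is exactly where the hypothesis $|K|\ge d/2$ is consumed and the factor of two is absorbed. One checks $\max L\le n-1\le p$ and
\[
|T|=nq\ge(n-2|K|)q+D(G)\ge(\max L+1-|L|)q+D(G)-1,
\]
so Theorem \ref{thm:sets} produces a zero-sum subsequence of $T$ with length in $Lq$; if that length is $(n-k)q$ with $k\in K$, its complement inside the zero-sum sequence $T$ is a zero-sum subsequence of length $kq$. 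Without this step your proof does not close, since the polynomial genuinely fails to vanish on the vectors indexing those long zero-sums and no contradiction is reached. I would also encourage you to replace the polynomial re-derivation with a direct citation of Theorem \ref{thm:sets} applied to $K'$, which shortens the argument to the two applications the paper uses.
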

\begin{proof}
For any zero-sum sequence $T$ with $|T|=nq$ and $2\max K+1\le n\le p+1$,
we can define $L=K\cup(n-K)\subseteq[1,p]$ having $|L|=2|K|\ge d$
and $\max L\le n-1$. Since
\[
|T|=nq\ge(n-2|K|)q+D(G)\ge(\max L+1-|L|)q+D(G)-1,
\]
we can apply Theorem \ref{thm:sets} to $T$ with length set $L$.
Thus $T$ has a zero-sum subsequence with length in $Lq=Kq\cup(n-K)q$.
However, if it had a zero-sum subsequence $T_{1}$ with length $(n-k)q$
and $k\in K$, then $TT_{1}^{-1}$ has length $kq$ with $k\in K$,
and is also zero-sum since $T$ itself is zero-sum. It follows that
$T$ has a zero-sum subsequence with length in $Kq$.

Let $S$ be a sequence over $G$ of length $(2\max K+1-|K|)q+D(G)-1$.
Now, let $K'=K\cup\{2\max(K)+i:i\in[1,|K|]\}$. We have $|K'|=2K\geq d$
and $\max(K')=2\max(K)+|K|\le p$, by hypothesis. Also, 
\[
|S|=(2\max K+1-|K|)q+D(G)-1\ge(\max K'+1-|K'|)q+D(G)-1,
\]
so by Theorem \ref{thm:sets} again, this time applied to $K'$ and
$S$, we see that any sequence $S$ satisfying $|S|\geq(2\max(K)+|K|+1)q+D(G)-1$
has a zero-sum subsequence $T$ in with length in $K'q$. If $|T|\in Kq$
we're done. Otherwise, $|T|=nq$ with 
\[
2\max K+1\le n\le2\max K+|K|\le p.
\]
But then $T$ has a zero-sum subsequence with length in $Kq$ by the
previous argument, and so $S$ does as well. 
\end{proof}
Next we prove a much stronger bound than Theorem \ref{thm:mainbound}
on the interval $k\in[2d-1,p]$.
\begin{lem}
\label{lem:2d}Let $p$ be a prime, let $G$ be a finite abelian $p$-group
with $\exp(G)=q$, and let $k\in[2d-1,p]$ be an integer. Then,
\[
s_{kq}(G)\le kq+2D(G)-2.
\]
\end{lem}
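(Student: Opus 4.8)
The goal is to bound $s_{kq}(G)$ for $k$ in the range $[2d-1,p]$. We already have from Lemma \ref{lem:half} a tool that works for length sets $K$ of size at least $d/2$. The plan is to apply Lemma \ref{lem:half} to a cleverly chosen singleton or small set $K$ so that we can extract a zero-sum subsequence of length exactly $kq$.

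First I would try the most direct approach: take $K=\{k\}$, a singleton. For Lemma \ref{lem:half} to apply we need $|K|=1\ge d/2$, i.e. $d\le 2$, which is too restrictive. So a singleton will not work in general; we must build up the length $kq$ from shorter zero-sum subsequences. The natural idea is subadditivity: using Lemma \ref{lem:subadditive}, if we can find zero-sum subsequences of two lengths $aq$ and $bq$ with $a+b=k$, we can combine them. So I would partition $k=a+b$ with both $a,b$ lying in a range where Lemma \ref{lem:half} produces good bounds, choosing $a,b\approx k/2$ so that each sits comfortably in the range $[2\max K + |K|\le p]$ and admits a set $K$ of size $\ge d/2$.

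More precisely, here is the plan. I would set up a length set $K$ of size exactly $\lceil d/2\rceil$ whose elements cluster near $k$, arranged so that $2\max K + |K|\le p$ (this is where the hypothesis $k\le p$ and the implicit constraint on $p$ relative to $d$ enter). Applying Lemma \ref{lem:half} to a sequence $S$ of the target length $kq+2D(G)-2$ gives a zero-sum subsequence $T$ with $|T|\in Kq$. If $|T|=kq$ we are done; otherwise $|T|=k'q$ for some $k'\in K$ close to $k$, and I would then apply either Theorem \ref{thm:sets} or Lemma \ref{lem:half} again to $ST^{-1}$ (or to $T$) to adjust the length to exactly $kq$, using that the leftover sequence $ST^{-1}$ is long enough since $|S|$ exceeds $k'q$ by a controlled amount. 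The bookkeeping is to verify that at each stage the relevant inequalities $|K|\ge d/2$, $2\max K+|K|\le p$, and the length bounds from Lemma \ref{lem:half} are simultaneously satisfiable given $k\le p$ and the assumed lower bound on $p$.

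The main obstacle I expect is the arithmetic of choosing $K$ so that all constraints hold at once: the factor-of-two loss in Lemma \ref{lem:half} means $\max K$ is roughly doubled, so fitting everything inside $[1,p]$ while keeping $|K|\ge d/2$ forces tight control when $k$ is near the top of its range $p$. The reason the bound improves to $2D(G)-2$ rather than the weaker $3D(G)-3$ of Theorem \ref{thm:mainbound} is precisely that in this middle range we can afford sets $K$ of size $\ge d/2$ (rather than needing a more wasteful iteration), so the additive constant stays at $2D(G)-2$. I would confirm the final length inequality $s_{kq}(G)\le kq+2D(G)-2$ by checking that $(2\max K+1-|K|)q+D(G)-1$, after substituting the chosen $K$ and bounding $\max K$ in terms of $k$ and $|K|\approx d/2\approx D(G)/(2q)$, collapses to at most $kq+2D(G)-2$.
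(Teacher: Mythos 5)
Your proposal has two genuine gaps. First, the plan to take a length set $K$ of size about $d/2$ ``clustered near $k$'' and feed it to Lemma \ref{lem:half} cannot work across the stated range: Lemma \ref{lem:half} requires $2\max K+|K|\le p$, so if $\max K$ is near $k$ this forces roughly $2k\le p$, whereas the lemma must cover all $k$ up to $p$. This is not a matter of ``tight bookkeeping''; for $k>p/2$ the constraint is simply violated. Note also that Lemma \ref{lem:2d} carries no auxiliary lower bound on $p$ beyond the implicit $p\ge 2d-1$, so there is no ``assumed lower bound on $p$'' to lean on. Second, the fallback step --- get a zero-sum subsequence of length $k'q$ with $k'$ close to $k$, then ``adjust'' to exactly $kq$ --- is not a real argument: adjusting requires a zero-sum subsequence of the small exact length $(k-k')q$ in the remainder, and producing zero-sum subsequences of a single prescribed small length (e.g.\ $q$ or $2q$) is exactly what Theorem \ref{thm:sets} and Lemma \ref{lem:half} cannot do, since they need length sets of cardinality at least $d$ (resp.\ $d/2$).

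The idea you are missing is to keep the target length \emph{flexible on both sides} rather than fixing one piece and adjusting the other. The paper factors $S=S_1S_2$ with $|S_1|=(k+1-d)q+D(G)-1$ and $|S_2|=(d-1)q+D(G)-1$, and applies Theorem \ref{thm:sets} (not Lemma \ref{lem:half}) to $S_2$ with \emph{every} $d$-subset $K$ of $[1,2d-2]$. Since each such application produces a zero-sum subsequence with length in $Kq$, the set of lengths in $[1,2d-2]$ realized by zero-sum subsequences of $S_2$ must meet every $d$-subset, hence has size at least $d-1$; adding length $0$ gives a $d$-element set $L\subseteq[0,2d-2]$ of realizable lengths. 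Then $k-L$ is a $d$-element subset of $[k-2d+2,k]\subseteq[1,p]$ (here is where $k\ge 2d-1$ and $k\le p$ are used), and Theorem \ref{thm:sets} applied to $S_1$ with length set $k-L$ produces $T_1$ with $|T_1|\in(k-L)q$; concatenating with the matching $T_2|S_2$ of length $kq-|T_1|$ finishes the proof. Since $\max K$ only needs to reach $2d-2$ on one side and $k$ on the other, there is no factor-of-two obstruction, and the two pieces contribute $D(G)-1$ each, giving the constant $2D(G)-2$.
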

\begin{proof}
Let $S$ be a sequence over $G$ satisfying $|S|=kq+2D(G)-2$. Factor
$S=S_{1}S_{2}$ where $|S_{1}|=(k+1-d)q+D(G)-1$ and $|S_{2}|=(d-1)q+D(G)-1$. 

If $d=1$ then $G$ is cyclic and the result is a trivial consequence
of the Erd\H{o}s-Ginzburg-Ziv Theorem, so assume $d\ge2$. Let $K$
be any $d$-subset of $[1,2d-2]q$, and apply Theorem \ref{thm:sets}
to $S_{2}$ with length set $K$. By ranging $K$ through all possible
such subsets, we see that at least $d-1$ of the lengths in $[1,2d-2]q$
appear as the lengths of zero-sum subsequences of $S_{2}$. Together
with the empty subsequence, these lengths form a cardinality $d$
subset $L\subset[0,2d-2]q$ such that every length in $L$ is the
length of some zero-sum subsequence $T_{2}|S_{2}$.

It remains to show that some length in $kq-L$ is the length of a
zero-sum subsequence $T_{1}|S_{1}$. But $kq-L$ has cardinality $d$
and maximum at most $kq$. Since $|S_{1}|\ge(k+1-d)q+D(G)-1$ we can
apply Theorem \ref{thm:sets} to conclude that $S_{1}$ indeed contains
a zero-sum subsequence $T_{1}$ with length in $kq-L$. Combining
$T_{1}$ and $T_{2}$ we find that $S$ has a zero-sum subsequence
with the desired length $kq$. 
\end{proof}
Using Lemma \ref{lem:half} and Lemma \ref{lem:2d} together we can
prove Theorem \ref{thm:mainbound}. 
\begin{proof}
(of Theorem \ref{thm:mainbound}) For $k\in[2d-1,p]$, the result
follows by Lemma \eqref{lem:2d}, and since $p\ge2d-1$ this interval
is nonempty. 

Now suppose $d\ge2$ and $k\in[d,2d-2]$, so that $k\le p-1$. Let
$m=\lceil\frac{D(G)}{2q}\rceil$ and
\[
t=\Big\lfloor\frac{k}{2}\Big\rfloor+m-1.
\]

Note that since $k\ge d$ we have $t\ge2m-2$. Factor $S=S_{1}S_{2}$
with lengths satisfying
\begin{eqnarray*}
|S_{1}| & \ge & (2t-m+1)q+D(G)-1\\
|S_{2}| & \ge & (2k-2t+3m-3)q+D(G)-1.
\end{eqnarray*}

First, assume $m\ge2$ and $t>2m-2$. We can apply Lemma \ref{lem:half}
to $S_{1}$ with with all possible $m$-subsets $K$ of $[t-2m+2,t]$.
This is possible because for such a set $K$ we have $2\max K+|K|\le2t+m\le p$
by the hypothesis of the theorem. Thus there is a set $L\subset[t-2m+2,t]$
of cardinality $m$ such that every element of $Lq$ appears as the
length of some zero-sum sequence $T_{1}|S_{1}$. 

In the case that $t=2m-2$ exactly, we modify the argument slightly
by finding, along the same lines, an $L'\subset[t-2m+3,t]$ of cardinality
$m-1$ with this property, and then adding in the zero sequence to
form $L$. 

Finally in the case $m=1$ we have $t=0$ and we just take $ $$ $$L=\{0\}$,
and the desired properties still hold.

Now, we simply apply Lemma \ref{lem:half} to $S_{2}$ with the set
of lengths $kq-Lq$. This set has cardinality $m$ and maximum value
at most $k-t+2m-2$, and $p$ satisfies
\[
2(k-t+2m-2)+m\le2d+3m-3\le p,
\]
the conditions are satisfied and some $T_{2}|S_{2}$ has sum zero
and length $|T_{2}|\in kq-Lq$. Concatenating it with the corresponding
subsequence of $S_{1}$ the theorem is proved for $k\in[d,2d-2]$.

Finally it is easy to apply Lemma \ref{lem:subadditive} to prove
the theorem inductively on all $k\ge2d$. If $k\ge2d$, then we can
write $k=d+k'$ wih $k'\ge d$ and Lemma \ref{lem:subadditive} gives
\begin{eqnarray*}
s_{kq}(G) & \le & \max\{s_{dq}(G)+k'q,s_{k'q}(G)\}\\
 & \le & (k+2d-2)q+3D(G)-3
\end{eqnarray*}
and by induction the bound is proved for all $k\ge2d$.
\end{proof}
We briefly complete the proof of Corollary \ref{cor:lbound2}.
\begin{proof}
(of Corollary \ref{cor:lbound2}.) From Lemma \ref{lem:pcase}, we
have already $s_{pq}(G)=pq+D(G)-1$, and from Theorem \ref{thm:mainbound},
we have $s_{kq}(G)\le(k+2d-2)q+3D(G)-3$ if $k\ge d$. Combining these
via Lemma \ref{lem:subadditive}, we get
\begin{eqnarray*}
s_{kq}(G) & \le & \max\{s_{pq}(G)+(k-p)q,s_{(k-p)q}(G)\}\\
 & = & \max\{kq+D(G)-1,(k-p+2d-2)q+3D(G)-3\}.
\end{eqnarray*}

It suffices to show that under the assumptions of Corollary \ref{cor:lbound2},
the first term is the maximum. In fact, we are given
\begin{eqnarray*}
p & \ge & 2d-2+\Big\lceil\frac{2D(G)-2}{q}\Big\rceil\\
pq & \ge & (2d-2)q+2D(G)-2\\
kq+D(G)-1 & \ge & (k-p+2d-2)q+3D(G)-3,
\end{eqnarray*}
as desired.
\end{proof}

\section*{Closing Remarks and Open Problems}

We first make a few observations regarding the problem of Gao on the
threshold $\ell(G)$ after which $s_{kn}(G)=D(G)+kn-1$ for all $k\geq\ell(G)$,
where $n=\exp(G)$. Gao et al. \cite{GHPS} proved Theorem \ref{thm:gao}
which shows in general that 
\[
D(G)\leq n\ell(G)\leq|G|.
\]

It is conjectured by Gao et al. \cite{GHPS} that the lower bound
is tight, i.e. 
\[
\ell(G)=\bigg\lceil\frac{D(G)}{n}\bigg\rceil
\]
for all $G$. Thus we are mainly interested in improving the upper
bound $|G|/n$. In the case that $G$ is a $p$-group we can do much
better than $\ell(G)\leq|G|/n$ using Theorem \ref{thm:mainbound},
getting $\ell(G)\leq p+d$ when $p$ and $d$ satisfy the conditions
of Theorem \ref{thm:mainbound}.

For comparison, the results of Kubertin give $\ell(C_{p}^{d})\leq p+Cd^{2}$
for a constant $C>0$, while conjectural value is $\ell(C_{p}^{d})=d$,
so any bound independent of $p$ would be a significant improvement
on Theorem \ref{cor:lbound2}. The only available method for proving
bounds on $\ell(G)$ is combining bounds of the sort in Theorem \ref{thm:mainbound}
with Lemma \ref{lem:pcase}, which depends on $p$. 
\begin{problem*}
Can we remove the dependence on $p$ in Corollary \ref{cor:lbound2}? 
\end{problem*}
Just as in the special case $k=1$, bounds on $s_{kp}(C_{p}^{d})$
give rise to bounds on $s_{k}(C_{n}^{d})$, although in general the
dependence is weaker. As an easy consequence of Theorems \ref{thm:mainbound}
and \ref{cor:lbound2} we prove the following multiplicativity lemma. 
\begin{lem}
\label{lem:inductivegeneral}Let $p$ be a prime, let $q$ be a power
of $p$, and let $G$ be a finite abelian group with $\exp(G)=qn$
such that the quotient group $H=G/qG$ satisfies $\Big\lceil\frac{D(G)}{q}\Big\rceil=d$
and $p\ge2d+3\Big\lceil\frac{D(G)}{2q}\Big\rceil-3$. If $a,b>0$,
then 
\[
s_{abqn}(G)\leq s_{an}(qG)bq+(2d-2)q+3D(G)-3,
\]
and if furthermore $p\ge2d-2+\Big\lceil\frac{2D(G)-2}{q}\Big\rceil$
and $b\ge p+d$, then 
\[
s_{abqn}(G)\leq s_{an}(qG)bq+D(G)-1.
\]
\end{lem}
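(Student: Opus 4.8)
The plan is to prove the statement by a dimension-reduction argument, peeling off one ``layer'' of $G$ at a time and reducing to the quotient group $qG$. The key structural observation is that the quotient map $\pi\colon G\to H=G/qG$ has kernel $qG$, and $qG$ is itself a finite abelian $p$-group whose exponent is $n$ (since $\exp(G)=qn$ and multiplication by $q$ kills the bottom $q$-torsion). I would first set up a sequence $S$ over $G$ with $|S|$ equal to the right-hand side bound, and push it forward to a sequence $\pi(S)$ over $H$. The idea is that finding a zero-sum subsequence of length $abqn$ in $G$ can be accomplished in two stages: first locate many zero-sum subsequences in $H$ (which forces the corresponding subsequences of $S$ to lie in $qG$), and then combine these $qG$-valued subsequences using the hypothesis on $s_{an}(qG)$.

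Concretely, the first step is to extract from $\pi(S)$ a large collection of disjoint zero-sum subsequences over $H$, each of controlled length. Since $H$ is a $p$-group with $\lceil D(G)/q\rceil = d$ and $p\ge 2d+3\lceil D(G)/2q\rceil-3$, Theorem~\ref{thm:mainbound} applies to $H$ and bounds $s_{kq}(H)$; repeatedly removing zero-sum subsequences of length roughly $bq$ in $H$ (using that $\exp(H)=q$) produces $b$ disjoint blocks $T_1,\dots,T_b$ of $S$, each of length $q$-many blocks summing to zero in $H$, hence each $\sigma(T_i)\in qG$. The second step is to form the sequence $(\sigma(T_1),\dots,\sigma(T_b))$ over $qG$, or more precisely a sequence whose entries are these block-sums, and apply the definition of $s_{an}(qG)$: once we have at least $s_{an}(qG)$ such blocks available, some $an$ of the block-sums form a zero-sum subsequence over $qG$. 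Concatenating the corresponding $T_i$ yields a zero-sum subsequence of $S$ of length $abqn$, as desired. The bookkeeping of lengths is arranged so that $s_{an}(qG)$ blocks, each contributing length $bq$ to the total, plus the overhead of $(2d-2)q+3D(G)-3$ needed to guarantee each block exists via Theorem~\ref{thm:mainbound}, reproduces exactly the claimed bound.

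The main obstacle will be the length accounting and ensuring that each of the $s_{an}(qG)$ blocks can be extracted with length a multiple of $q$ that also makes the block-sum sit in $qG$ with the right \emph{weight} for the $qG$-level argument. The subtlety is that a zero-sum subsequence of $\pi(S)$ of length $\ell q$ gives a $G$-subsequence whose sum lies in $qG$, but to treat these sums as a sequence over $qG$ and count lengths in multiples of $n$ we must track that the underlying $G$-length is $\ell q$ while the $qG$-contribution counts as $\ell$ toward the target $an$; matching $abqn$ requires each block to have $H$-length exactly $bq$ (so $\sigma(T_i)\in qG$ corresponds to ``one unit'' scaled by $b$) and then $an$ blocks over $qG$. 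I would handle this by invoking Theorem~\ref{thm:mainbound} to guarantee a zero-sum subsequence of $\pi(S)$ of length precisely a prescribed multiple of $q$, repeatedly, reserving a buffer of $(2d-2)q+3D(G)-3$ symbols that is never consumed until the final block.

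For the second, sharper bound, the argument is identical except that the hypotheses $p\ge 2d-2+\lceil (2D(G)-2)/q\rceil$ and $b\ge p+d$ let us invoke Corollary~\ref{cor:lbound2} in place of Theorem~\ref{thm:mainbound} when extracting each block over $H$. Since $b\ge p+d=\ell(H)$ under these conditions, we have the exact value $s_{bq}(H)=bq+D(G)-1$ rather than the weaker upper bound, which replaces the overhead term $(2d-2)q+3D(G)-3$ by the tight $D(G)-1$. The block-extraction and $qG$-recombination steps are otherwise unchanged, so the same length accounting goes through and yields the improved estimate $s_{abqn}(G)\le s_{an}(qG)bq+D(G)-1$.
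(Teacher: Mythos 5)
Your proposal is correct and takes essentially the same route as the paper: extract $s_{an}(qG)$ disjoint length-$bq$ subsequences whose sums lie in $qG$ by applying Theorem~\ref{thm:mainbound} (or Corollary~\ref{cor:lbound2} when $b\ge p+d$) to the quotient $H=G/qG$, then find a zero-sum subsequence of length $an$ among the block sums and concatenate. The only slip is the phrase ``produces $b$ disjoint blocks $T_1,\dots,T_b$'', which should read $s_{an}(qG)$ blocks, as your own subsequent accounting makes clear.
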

\begin{proof}
Given any sequence $S$ over $G$ with length at least $s_{an}(qG)bq+(2d-2)q+3D(G)-3$,
we can repeatedly remove, using Theorem \ref{thm:mainbound} on $G/qG$,
length $bq$ subsequences of $S$ whose sums lie in $qG$ until the
length falls below $(b+2d-2)q+3D(G)-3$. This can be repeated to extract
a total of $s_{an}(qG)$ disjoint zero-sum subsequences. The same
can be done using Corollary \ref{cor:lbound2} instead if $b\geq p+d$.

In either case, we end up with $s_{an}(qG)$ disjoint subsequences
of $G$, each of length $bq$ and having sum in $qG$. Thus there
is a zero-sum subsequence of the sequence of their sums, with length
$an$, corresponding to a zero-sum subsequence of $S$ with length
$abqn$ as desired.
\end{proof}
We can bound $s_{k}(G)$ directly from Lemma \ref{lem:inductivegeneral}
by induction. The empty product is taken to be $1$.
\begin{prop}
\label{prop:generalbound}Let $G$ be a finite abelian group with
$\exp(G)=n$, decomposed as
\[
G=\bigoplus_{i=1}^{r}G_{p_{i}},
\]
a direct sum of $p_{i}$-groups $G_{p_{i}}$ with $\exp(G_{p_{i}})=q_{i}$
and $\Big\lceil\frac{D(G_{p_{i}})}{q_{i}}\Big\rceil=d_{i}$, satisfying
$p_{i}\ge2d_{i}+3\Big\lceil\frac{D(G_{p_{i}})}{2q_{i}}\Big\rceil-3$.
Then, 
\[
s_{kn}(G)\leq kn+\sum_{i=0}^{r-1}\bigg(\prod_{j=1}^{i}a_{i}q_{i}\bigg)((2d_{i+1}-2)q_{i+1}+3D(G_{p_{i+1}})-3),
\]
where $k$ is any positive integer that can be written as a product
$k=a_{1}\cdots a_{r}$ of positive integers $a_{i}\ge d_{i}$. Also,

\[
s_{kn}(G)\le kn+\sum_{i=0}^{r-1}\bigg(\prod_{j=1}^{i}a_{i}q_{i}\bigg)(D(G_{p_{i+1}})-1)
\]
if each $p_{i}$ satisfies $p_{i}\ge2d_{i}-2+\Big\lceil\frac{2D(G_{p_{i}})-2}{q_{i}}\Big\rceil$
and each $a_{i}$ satisfies $a_{i}\ge p_{i}+d_{i}$.\end{prop}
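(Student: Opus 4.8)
The plan is to induct on the number $r$ of distinct primes, peeling off one prime at a time via Lemma~\ref{lem:inductivegeneral}. The entire argument rests on one structural observation. Write $q=q_{1}$. Multiplication by $q$ annihilates the $p_{1}$-component $G_{p_{1}}$, since $\exp(G_{p_{1}})=q_{1}=q$, and acts invertibly on each $G_{p_{i}}$ with $i\ge2$, since $q$ is a power of $p_{1}$ and hence coprime to $p_{i}$. Therefore $qG=\bigoplus_{i=2}^{r}G_{p_{i}}=:G'$, a group of exponent $n'=n/q_{1}$, while the quotient $G/qG\cong G_{p_{1}}$ is exactly the $p_{1}$-group whose invariants $d_{1}$, $D(G_{p_{1}})$ and prime bound $p_{1}$ appear in the hypotheses. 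This is what licenses applying the $p$-group bound of Theorem~\ref{thm:mainbound} to the quotient inside Lemma~\ref{lem:inductivegeneral}.

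For the base case $r=1$ the group $G=G_{p_{1}}$ is itself a $p$-group with $\exp(G)=q_{1}=n$, and $k=a_{1}\ge d_{1}$, so Theorem~\ref{thm:mainbound} gives
\[
s_{kn}(G)\le(a_{1}+2d_{1}-2)q_{1}+3D(G_{p_{1}})-3=kn+(2d_{1}-2)q_{1}+3D(G_{p_{1}})-3,
\]
which is the claimed bound, its sum consisting of the single $i=0$ term. The stronger bound follows identically from Corollary~\ref{cor:lbound2} under the extra hypotheses $p_{1}\ge2d_{1}-2+\lceil(2D(G_{p_{1}})-2)/q_{1}\rceil$ and $a_{1}\ge p_{1}+d_{1}$.

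For the inductive step I apply the first part of Lemma~\ref{lem:inductivegeneral} with $q=q_{1}$, taking $b=a_{1}$ and $a=a_{2}\cdots a_{r}=:k'$; the choice $b=a_{1}\ge d_{1}$ is what makes the extraction of length-$a_{1}q_{1}$ blocks (with sums in $qG$) legitimate, as that step runs Theorem~\ref{thm:mainbound} on $G/qG\cong G_{p_{1}}$ with parameter $k=b=a_{1}\ge d_{1}$. Since $\exp(G)=q_{1}n'$ and $abqn'=k'a_{1}q_{1}n'=kn$, the lemma reads
\[
s_{kn}(G)\le s_{k'n'}(G')\cdot a_{1}q_{1}+(2d_{1}-2)q_{1}+3D(G_{p_{1}})-3.
\]
Now $G'=\bigoplus_{i=2}^{r}G_{p_{i}}$ has one fewer prime, satisfies the same hypotheses restricted to $i\ge2$, and $k'=a_{2}\cdots a_{r}$ factors with $a_{i}\ge d_{i}$, so the induction hypothesis bounds $s_{k'n'}(G')$. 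Writing $c_{\ell}=(2d_{\ell}-2)q_{\ell}+3D(G_{p_{\ell}})-3$, substitution and the identity $k'n'\cdot a_{1}q_{1}=kn$ make the error terms telescope: multiplying the inductive sum $\sum_{\ell=2}^{r}(\prod_{j=2}^{\ell-1}a_{j}q_{j})c_{\ell}$ by $a_{1}q_{1}$ converts each $\prod_{j=2}^{\ell-1}$ into $\prod_{j=1}^{\ell-1}$, and prepending the new term $c_{1}=(\prod_{j=1}^{0}a_{j}q_{j})c_{1}$ reassembles $\sum_{\ell=1}^{r}(\prod_{j=1}^{\ell-1}a_{j}q_{j})c_{\ell}$, i.e.\ the asserted bound after reindexing $i=\ell-1$. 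The second bound is obtained by the same induction, invoking instead the second part of Lemma~\ref{lem:inductivegeneral} under the stronger prime conditions and $a_{i}\ge p_{i}+d_{i}$, which replaces each $c_{\ell}$ by $D(G_{p_{\ell}})-1$.

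The only genuine work is bookkeeping. I must confirm the splitting $qG=G'$ and $G/qG\cong G_{p_{1}}$ so that the $p$-group machinery applies to the right quotient, match the lemma's pair $(a,b)$ to $(k',a_{1})$ so that $abqn'=kn$, and verify that multiplication by $a_{1}q_{1}$ shifts the product index from $\prod_{j\ge2}$ to $\prod_{j\ge1}$. I expect the index-shifting in the telescoping sum to be the easiest place to slip, since the exponent written $\prod_{j=1}^{i}a_{i}q_{i}$ in the statement is most naturally read as $\prod_{j=1}^{i}a_{j}q_{j}$.
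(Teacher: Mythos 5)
Your proof is correct and is essentially the paper's argument written out in full: the paper's one-line proof ("apply Lemma \ref{lem:inductivegeneral} with the filtration $G_{j}=\bigoplus_{i=1}^{j}G_{p_{i}}$") is exactly your induction peeling off one prime component at a time via $qG=\bigoplus_{i\ge2}G_{p_{i}}$ and $G/qG\cong G_{p_{1}}$. Your added care in checking $b=a_{1}\ge d_{1}$ (needed for Theorem \ref{thm:mainbound} on the quotient) and in verifying the telescoping of the error terms is a welcome expansion of detail the paper omits.
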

\begin{proof}
Apply Lemma \ref{lem:inductivegeneral} with the filtration $G_{j}=\bigoplus_{i=1}^{j}G_{p_{i}},j=0,\ldots,r$
of $G$. Each subquotient $G_{j}/G_{j-1}\simeq G_{p_{j}}$ is a $p_{j}$-group
so the lemma applies.
\end{proof}
As a corollary, we have the following inequality by bounding the error
term crudely by a geometric series. For clarity, we state it in terms
of groups of the form $C_{n}^{d}$ though bounds on any finite abelian
group can be made in the same way.
\begin{cor}
\label{cor:generalbound}For $d>0$, $n=p_{1}p_{2}\cdots p_{r}$ with
not necessarily distinct prime factors $p_{1},\ldots,p_{r}\geq\frac{7}{2}d-3$,
and $k=a_{1}a_{2}\cdots a_{r}$ a product of positive integers $a_{1},a_{2},\ldots,a_{r}\geq d$,

\[
s_{kn}(C_{n}^{d})\leq9kn.
\]
If furthermore each $p_{i}$ satisfies $p_{i}\ge4d-2$ and each $a_{i}$
satisfies $a_{i}\ge p_{i}+d$, then
\[
s_{kn}(C_{n}^{d})\le3kn.
\]

\end{cor}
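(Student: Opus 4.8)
The plan is to invoke Proposition~\ref{prop:generalbound} with $G = C_n^d$ and the given factorization $n = p_1 p_2 \cdots p_r$, and then bound the resulting error term by a geometric series. In the filtration of Lemma~\ref{lem:inductivegeneral} each $p_i$-layer is the group $C_{p_i}^d$, so in the notation of the proposition $q_i = p_i$ and, by Olson's theorem, $D(G_{p_i}) = d(p_i-1)+1$. Since $p_i$ is large (for $d\ge 2$ we have $p_i \ge \frac{7}{2}d - 3 \ge d$, and $d=1$ is immediate) one checks $d_i = \lceil D(G_{p_i})/p_i\rceil = d$, while the multiplicity hypotheses $a_i \ge d_i = d$ (first bound) and $a_i \ge p_i + d$ (second bound) are exactly those assumed in the statement.

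First I would verify the prime-size hypotheses of Proposition~\ref{prop:generalbound} layer by layer. A direct computation with $D(G_{p_i}) = d(p_i-1)+1$ gives $\lceil D(G_{p_i})/(2p_i)\rceil = \lceil d/2\rceil$ and $\lceil (2D(G_{p_i})-2)/p_i\rceil = 2d$, so the two conditions $p_i \ge 2d_i + 3\lceil D(G_{p_i})/(2q_i)\rceil - 3$ and $p_i \ge 2d_i - 2 + \lceil (2D(G_{p_i})-2)/q_i\rceil$ reduce to $p_i \ge 2d + 3\lceil d/2\rceil - 3$ and $p_i \ge 4d-2$; these are precisely the hypotheses $p_i \ge \frac{7}{2}d - 3$ and $p_i \ge 4d-2$ of the corollary (the ceiling in $\lceil d/2\rceil$ being the only point to watch for odd $d$).

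Next I would substitute the layer data into the two error sums of the proposition. Writing $P_i = \prod_{j=1}^i a_j p_j$, so that $P_0 = 1$ and $P_r = kn$, the per-layer contributions collapse to $(2d-2)p_{i+1} + 3D(G_{p_{i+1}}) - 3 = (5d-2)p_{i+1} - 3d$ and to $D(G_{p_{i+1}}) - 1 = d(p_{i+1}-1)$ respectively. The crucial identity is $P_i p_{i+1}/P_r = a_{i+1}^{-1}\prod_{j=i+2}^r (a_j p_j)^{-1}$; using $a_j \ge d$ and $a_j p_j \ge 2d \ge 2$ in the first case (resp. $a_j \ge 5d-2$ and $a_j p_j \ge 10d-4$ in the second, which follow from $a_j \ge p_j + d \ge 5d-2$), the sum $\sum_{i=0}^{r-1} P_i p_{i+1}$ is dominated by a convergent geometric series and is at most $\frac{2}{2d-1}kn$ (resp. $\frac{2}{5(2d-1)}kn$). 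Multiplying by the coefficients $5d-2$ and $d$ and simplifying with $d\ge 1$ bounds the total error by $\frac{2(5d-2)}{2d-1}kn \le 6kn$ and by $\frac{2d}{5(2d-1)}kn \le \frac{2}{5}kn$, whence $s_{kn}(C_n^d) \le kn + 6kn = 7kn \le 9kn$ and, under the stronger hypotheses, $s_{kn}(C_n^d) \le kn + \frac{2}{5}kn = \frac{7}{5}kn \le 3kn$.

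I expect the only genuine work to be the geometric-series bookkeeping: arranging the sum so that the ratio of consecutive terms is visibly at most $(a_j p_j)^{-1} \le 1/2$, which is what makes the series sum cleanly, together with the ceiling computations in the hypothesis reduction. No idea beyond Proposition~\ref{prop:generalbound} is required; the constants $9$ and $3$ are comfortably met, the crude estimates above already yielding $7kn$ and $\frac{7}{5}kn$.
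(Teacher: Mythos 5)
Your proof is correct and follows exactly the route the paper intends (the paper gives no details beyond ``bounding the error term crudely by a geometric series''): your identification of the layers as $C_{p_i}^{d}$ with $d_i=d$, the bookkeeping with $P_i=\prod_{j\le i}a_jp_j$, the ratio bounds $1/(a_ip_{i+1})\le 1/(2d)$ and $1/(2(5d-2))$, and the resulting totals $7kn$ and $\tfrac{7}{5}kn$ all check out. The one caveat you flag but do not resolve is real, though it is a slip in the paper's statement rather than in your argument: for odd $d$ the hypothesis of Theorem \ref{thm:mainbound} requires $p_i\ge 2d+3\lceil d/2\rceil-3=\tfrac{7d-3}{2}$, which is strictly stronger than the corollary's stated $p_i\ge\tfrac{7}{2}d-3$ (e.g.\ $d=9$, $p_i=29$ satisfies the latter but not the former).
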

This is stronger than can be obtained by the iterative application
of Alon and Dubiner's general bounds \cite{AlonDubiner,AlonDubiner2}
on $s_{n}(C_{n}^{d})$, but only holds for a thin set of pairs $(k,n)$.
Of course, for any given $n$ satisfying the conditions of Proposition
\ref{prop:generalbound}, the inequality can be extended to all values
of $k$ in the semigroup generated additively by the $k$ satisfying
the stated condition, by Lemma \ref{lem:subadditive}, giving $s_{kn}(C_{n}^{d})\le9kn$
for all $k\ge d^{r}(d+1)^{r}$, where $r$ is the number of distinct
prime factors of $n$. Any technique achieving a bound of a strength
similar to that of Corollary \ref{cor:generalbound} but with the
threshold of $k$ independent of $n$ would be significant.

\section*{Acknowledgements}

This research was conducted at the Duluth Research Experience for
Undergraduates program in 2014, supported by National Science Foundation
grant number DMS-1358659 and National Security Agency grant number
H98230-13-1-0273. The author would like to thank Professor Joe Gallian
of the University of Minnesota-Duluth for organizing the program and
for his support in all stages of this work. Also, thanks go out to
Daniel Kriz for his many helpful suggestions. Input from an anonymous
referee was essential for reformulations of our results to their most
general form.

\end{document}